\documentclass{amsart}

\usepackage{subcaption}
\usepackage{color}
\usepackage{xparse}
\usepackage{enumitem}
\usepackage{needspace}
\usepackage{graphicx,amsmath,amsthm,enumitem,hyperref, amssymb, mathtools, subcaption, theoremref}

\newtheorem{theorem}{Theorem}
\newtheorem{lemma}[theorem]{Lemma}
\newtheorem{corollary}[theorem]{Corollary}
\newtheorem{remark}[theorem]{Remark}

\DeclareMathOperator\supp{supp}
\newcommand{\ra}{\rightarrow}
\newcommand{\E}{\mathbf{E}}
\renewcommand{\P}{\mathbf{P}}
\renewcommand{\emptyset}{\varnothing}
\renewcommand{\b}{b}

\newcommand{\Shield}{\mathsf{Shield}}
\NewDocumentCommand{\SurvFirst}{o}{%
	\mathsf{Surv}_{v_0 = \IfValueTF{#1}{#1}{v}}%
}

\title{The critical velocity of the bullet process appears pathwise}
\author{Josh Meisel}\address{Josh Meisel, Department of Mathematics, Graduate Center, City University of New York}\email{jmeisel@gradcenter.cuny.edu}

\begin{document}

\begin{abstract}
In the bullet process, a gun fires bullets in the same direction at independent random speeds, and with independent random time delays between firings. When two bullets collide, they vanish. The critical velocity $v_c$ is the slowest speed the first bullet can take and still have positive probability of surviving forever. We characterize the critical velocity via a random variable determined by the sequence of speeds and delays, which we show almost surely equals $v_c$. In turn we prove other facts about the process, including that infinitely many bullets survive when the velocity distribution has finite support. Along the way we answer a question from Broutin--Marckert (2020), showing that if a bullet survives, it does so in all but finitely many truncations of the process.
\end{abstract}

\maketitle

\section{Introduction}

In the \emph{bullet process}, bullets $\b_i$ for $i \ge 0$ are fired from the origin. The $i^\text{th}$ bullet is fired at time $t_i = \Delta_1 + \cdots + \Delta_i$, so that $t_0 = 0$ and $\Delta_i = t_i - t_{i-1}$ for $i \ge 1$. Bullet $b_i$ travels along $[0,\infty)$ at constant velocity $v_i > 0$. Whenever a faster bullet catches up to a slower one, they both annihilate and are removed from the system. All velocities $v_i$ for $i \ge 0$ and delays $\Delta_i$ for $i \ge 1$ are independent, drawn from \emph{velocity distribution} $\mu$ and \emph{delay distribution} $\nu$, both supported on $(0, \infty)$. Figure \ref{fig:diagram} depicts a simulation of the process. In this paper, we assume $(\mu, \nu)$ are such that almost surely no three bullets simultaneously collide --- which holds for instance if either $\mu$ or $\nu$ is non-atomic --- and call such $(\mu, \nu)$ a \emph{valid pair} of distributions. We denote the probability that the first bullet $\b_0$ survives when $v_0$ is set to $v$ by $\theta(v) = \theta(v, \mu, \nu)$, with the \emph{critical velocity} $v_c = v_c(\mu, \nu) \le \infty$ defined as 
\[
    v_c := \inf\{v \ge 0 \colon \theta(v) > 0\}.
\]

\begin{figure}[htbp]
	\centering
	\includegraphics[width=0.65\linewidth]{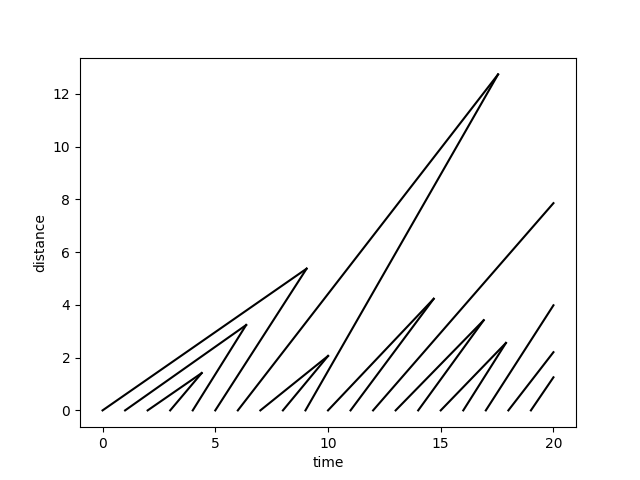}
	\caption{The time-space diagram for a sample path of the bullet process up to time $t=20$ in the setting with $\mu = \operatorname{Unif}(.5,1.5)$ and unit delays, i.e.\ $\nu = \delta_1$. Time moves left to right, with distance to the origin depicted vertically.}
	\label{fig:diagram}
\end{figure}

\begin{remark}
	We assume all speeds are strictly positive for convenience in the arguments. However, all our results go through if $\mu$ is supported on $[0, \infty)$ provided $\mu$ is not deterministically $0$ (when $\mu = \delta_0$ the bullet process is trivial to analyze, with each bullet $\b_{2k+1}$ for $k \ge 0$ annihilating $\b_{2k}$ at time $t_{2k+1}$). The first time a speed-$0$ bullet $\b_i$ is fired, it is immediately annihilated once $\b_{i+1}$ is fired. This has the net effect of simply extending the delay after the previous bullet from $\Delta_{i}$ to $\Delta_{i} + \Delta_{i+1} + \Delta_{i+2}$, and similarly with later speed-$0$ bullets. It is not hard then to couple this with a bullet process with strictly positive velocities and longer delays. 
\end{remark}

The bullet process was first defined by David Wilson \cite{wilson}, in the setting with velocity distribution $\mu = \operatorname{Unif}(0,1)$ and unit delays, i.e.\ $\nu = \delta_1$. The \textit{bullet problem} asks whether the first bullet survives with positive probability, that is $v_c(\operatorname{Unif}(0,1), \delta_1)$ is strictly less than $1$. Despite its simple setup, a solution has remained elusive over the last decade, contributing to the bullet problem's popularity. It is immediate that $v_c(\mu, \nu) \le \sup (\supp(\mu))$, since $b_0$ almost surely survives if we set $v_0 \ge \sup (\supp(\mu))$. It is conjectured that in the context of the bullet problem, $v_c \in (0,1)$, with our simulations placing it somewhere around $.75$ (see Figure \ref{fig:theta}). This updates the word-of-mouth conjecture that $v_c \sim 0.9$ from \cite{dygertJunge}.

\begin{figure}[htbp]
	\centering \includegraphics[width=0.75\linewidth]{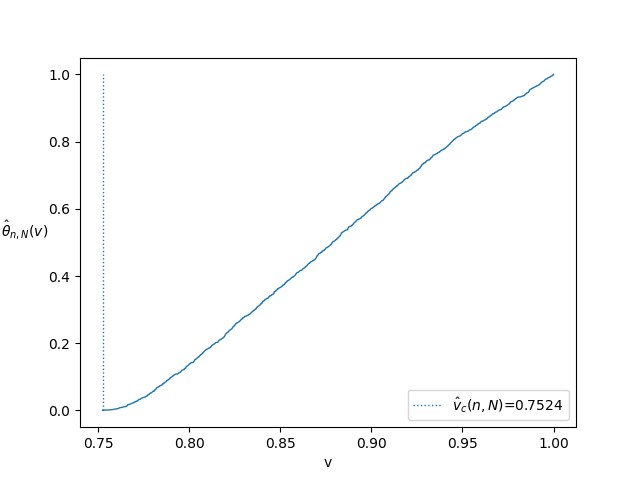}
	\caption{\textbf{\boldmath Approximation of $\theta$ and $v_c$.} We approximate $\theta(v)$ in the context of the bullet problem where $\mu = \operatorname{Unif}(0,1)$ and $\nu = \delta_1$, using $N=2000$ simulations of $n=1$ billion bullets. Letting $\theta_n(v)$ be the probability the first bullet survives with velocity $v$ among $n$ bullets, and $\hat{\theta}_{n,N}(v)$ its empirical estimation based on $N$ simulations, we find $\hat{v}_c(n, N) \sim .7524$, the minimal $v$ with $\hat{\theta}_{n,N}(v) > 0$. By Theorem \ref{thm:Marckert} we have $\theta_n(v) \to \theta(v)$ as $n \to \infty$, and therefore $\lim_{N \to \infty}\lim_{n \to \infty}\hat{v}_c(n, N) = v_c$ in probability.} 
	\label{fig:theta}
\end{figure}

In our main result, we define a random variable $\hat{v} = \hat{v}(\mu, \nu)$, measurable with respect to the velocities and delays, and prove it almost surely equals $v_c$. Thus the critical velocity, defined as a property of the entire distribution of the velocity and delay sequences, emerges pathwise as a property of almost every realization of the sequences. We define $\hat{v}$ in terms of the velocities of \emph{potential survivors}, bullets $\b_n$ which survive when only bullets $(\b_0, \ldots, \b_n)$ are fired. Note that $\b_n$ survives if and only if it is a potential survivor, and also survives among $(\b_n, \b_{n+1}, \ldots)$. Both events are independent conditional on $v_n$. With $(\b_{j_0}, \b_{j_1}, \ldots)$ all potential survivors ordered by increasing index, we let 
\[
    \hat{v} := \limsup_n v_{j_n}.
\]

Throughout, all claims involving random variables hold almost surely.

\begin{theorem} \label{thm:main}
	For any valid velocity and delay distributions $(\mu, \nu)$, 
    \[
        \hat{v}(\mu, \nu)= v_c(\mu, \nu).
    \]
\end{theorem}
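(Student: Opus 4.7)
The strategy is to prove $\hat v \ge v_c$ and $\hat v \le v_c$ almost surely, separately. Throughout I use the factorization from the excerpt: conditional on $v_n$, the event that $b_n$ is a potential survivor is independent of the event that $b_n$ survives among $(b_n, b_{n+1}, \ldots)$, and the latter has probability $\theta(v_n)$. Writing $q_n(v) = \P(b_n \text{ is a potential survivor} \mid v_n = v)$, we then have $\P(b_n \text{ survives} \mid v_n = v) = q_n(v)\,\theta(v)$ for every $n$.

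For the lower bound $\hat v \ge v_c$, I argue by contradiction. If $\P(\hat v < v_c) > 0$, then by density of rationals we can fix a rational $u < v_c$ with $\P(\hat v < u) > 0$ and condition on that event. By definition of $\limsup$, only finitely many potential survivors $b_{j_n}$ satisfy $v_{j_n} > u$, and since $\theta \equiv 0$ below $v_c$, the factorization forces only finitely many bullets to actually survive the infinite process. I would then contradict this via a shift/stationarity argument: for any fixed $w > v_c$ in $\supp \mu$, the quantity $q_k(w) = \P(\text{top of the stack just before } b_k \text{ exceeds } w)$ ought to stay bounded below for large $k$ by an ergodic analysis of the stack dynamics, which combined with $\theta(w) > 0$ and a suitable Borel--Cantelli-type input would yield infinitely many actually surviving bullets with velocity near $w$, a contradiction.

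For the upper bound $\hat v \le v_c$, again argue by contradiction. If $\P(\hat v > v_c) > 0$, fix a rational $w > v_c$ with $\P(\hat v > w) > 0$ and condition on that event. Then infinitely many potential survivors have $v_{j_n} > w$, and each has conditional actual-survival probability at least $\theta(w) > 0$ by the factorization. A conditional Borel--Cantelli argument --- made delicate by correlations between the survival events of distinct potential survivors --- would yield infinitely many actual survivors of velocity $> w$. I would then invoke the Broutin--Marckert-style truncation invariance proved elsewhere in the paper (Theorem \ref{thm:Marckert}), which identifies infinite-process survivors with late-truncation survivors, to convert this into a contradiction with the characterization of $v_c$ as the infimum of velocities giving positive survival probability.

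The main obstacles, as I see them, are twofold. First, the ergodic analysis of $q_k(w)$ needed for the lower bound amounts to understanding the long-run behavior of the top of the stack --- a nontrivial task, since the top is not a simple Markov chain. Second, the upper-bound Borel--Cantelli step must cope with the correlations between survival events of different potential survivors: infinitely many actual survivors is not itself a contradiction (the paper highlights this can occur for finitely-supported $\mu$), so the contradiction must draw on more than counting, likely through the truncation invariance and the specific velocity profile above $v_c$.
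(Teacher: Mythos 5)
Your two directions have rather different status relative to the paper.

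\textbf{Upper bound $\hat v \le v_c$.} Your outline is structurally close to the paper's. The paper upgrades ``infinitely many potential survivors faster than $w$'' to ``infinitely many actual survivors faster than $w$'' via a renewal/stopping-time argument (Lemma~\ref{lem:inf_S}), which is exactly the conditional Borel--Cantelli step you identify as delicate. The piece you are missing is the zero--one law (Lemma~\ref{lem:zero_one}): knowing $\P(|S_{>w}| = \infty) > 0$ is not enough for the final contradiction. One needs $\P(|S_{>w}| = \infty) = 1$, which forces $(S_{[1,\infty)})_{>w}$ to be nonempty \emph{almost surely}, and only then does $\theta(w) = 0$ follow, contradicting $w > v_c$. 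The zero--one law in turn requires a front-addition analog (Lemma~\ref{lem:delta_s}) of the Broutin--Marckert back-addition observation plus Birkhoff's theorem. Your invocation of ``truncation invariance'' (Theorem~\ref{thm:Marckert}) enters the argument, but inside the zero--one law proof rather than as the final closing step.

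\textbf{Lower bound $\hat v \ge v_c$.} Here the proposal departs from the paper and the gap is serious. You condition on $\{\hat v < u\}$ for some $u < v_c$, deduce that only finitely many bullets survive, and then seek a contradiction by showing that ``$q_k(w)$ stays bounded below'' plus a Borel--Cantelli input forces infinitely many survivors. This route, if it worked, would show $|S| = \infty$ almost surely --- but the paper explicitly records (before Corollary~\ref{cor:zero_one}) that it \emph{could not} resolve whether infinitely many bullets survive for general $\mu$, so the contradiction you are aiming for is not there to be had. There is also no reason to believe $q_k(w)$ is bounded below; nothing in the paper supports that. The paper's actual mechanism is entirely different and does not attempt a contradiction from finitely many survivors. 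Instead, Lemma~\ref{lem:stronger_half} argues the \emph{contrapositive}: assuming $|PS_{>v}| < \infty$ has positive probability, it exhibits a constructive, positive-probability configuration of $2m$ bullets (alternating slow--fast, the ``shield'') that protects a leading bullet $\b_0^v$ from all of its finitely many potential threats, directly yielding $\theta(v) > 0$. That construction is the key idea absent from your proposal, and without it the lower bound does not go through.
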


Using Theorem~\ref{thm:main}, we can numerically approximate $v_c$ by approximating $\hat{v}$ for a single realization of the bullet process. See Figure \ref{fig:hat_v} for an alternative approximation of the critical velocity in the bullet problem.

\begin{figure*}[tbp]
	\centering
	\begin{subfigure}{0.5\textwidth}
		\centering
		\includegraphics[width=\linewidth]{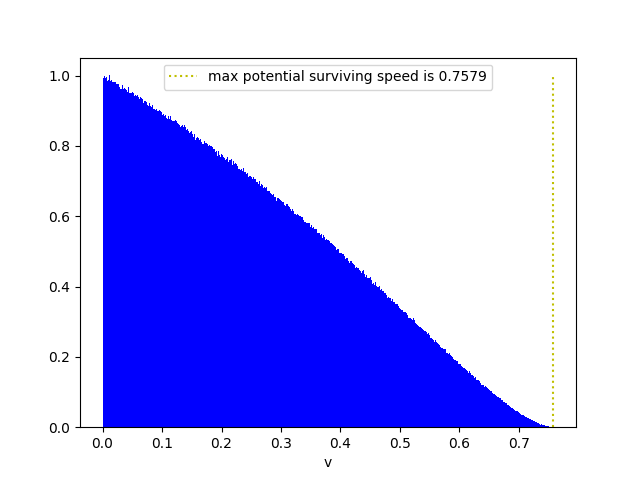}
	\end{subfigure}%
	~ 
	\begin{subfigure}{0.5\textwidth}
		\centering
		\includegraphics[width=\linewidth]{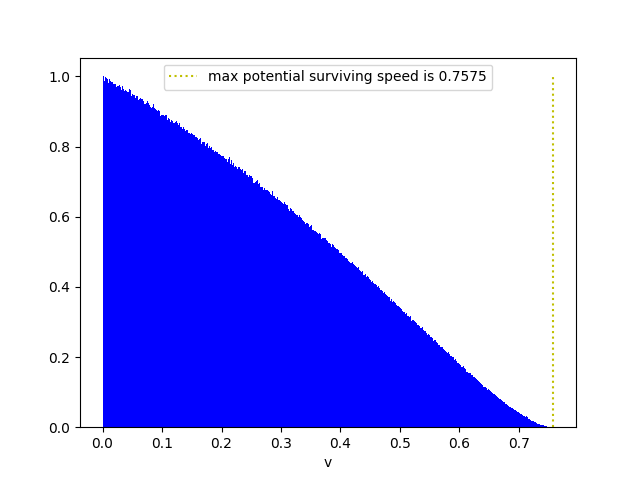}
	\end{subfigure}
	\caption{\textbf{\boldmath Approximation of $\hat{v} = v_c$.} We approximate $\hat{v}$ in two different simulations of the bullet process in its original setting using $n=10^8$ bullets each. We consider potential survivors fired between times $n/2+1$ and $n$. The yellow line displays their maximum velocity. A histogram of their velocities is shown in blue. We use buckets of width $.001$, and the height of each bar is the number of potential survivors with velocity in the bucket divided by $.001 (n/2)$, estimating the probability a velocity-$v$ bullet will be a potential survivor. While we have no guarantees on the rate of convergence for $\hat{v}$, the simulations appear to be stabilizing, giving a seemingly accurate estimate of $v_c$ in concordance with that from Figure~\ref{fig:theta} obtained by estimating $\theta$.}
	\label{fig:hat_v}
\end{figure*}

Moreover, Theorem~\ref{thm:main} gives us a handle on the bullet process which we leverage to analyze other properties. For instance, a question we sought to answer was whether infinitely many bullets survive. We prove this is the case when there are finitely many velocities, with all but finitely many survivors having critical velocity. 

\begin{theorem} \label{thm:finite}
	For any valid $(\mu, \nu)$ such that $\mu$ has finite support, almost surely infinitely many bullets survive, and all but finitely many survivors have velocity $v_c(\mu, \nu)$. 
\end{theorem}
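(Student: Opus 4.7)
I would invoke Theorem~\ref{thm:main} to get $\hat{v}=v_c$ almost surely, then exploit the finite support of $\mu$. Since $\hat{v}=\limsup_n v_{j_n}$ is the limsup of a sequence taking values in the finite set $\supp(\mu)$, the limsup is attained, and hence almost surely $v_c\in\supp(\mu)$, the index set $\{n:v_{j_n}=v_c\}$ is infinite, and $\{n:v_{j_n}>v_c\}$ is finite.

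I would next handle the velocity classification of survivors. Every survivor is a potential survivor, so only finitely many have velocity greater than $v_c$. For $v<v_c$, the definition of $v_c$ as an infimum gives $\theta(v)=0$; combined with the conditional factorization from the introduction,
\[
\P(b_n\text{ survives}\mid v_n=v)=\P(b_n\text{ is a potential survivor}\mid v_n=v)\cdot\theta(v),
\]
every bullet of velocity $<v_c$ has zero probability of surviving, so summing over $n$ gives almost surely no such survivors. Thus once the infinitude of survivors is established, all but finitely many will have velocity exactly $v_c$.

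For the infinitude of survivors, enumerate the potential survivors of velocity $v_c$ as $b_{i_1},b_{i_2},\dots$, and let $\mathcal{F}_k$ be the $\sigma$-algebra generated by the velocities and delays up through $b_{i_k}$. The factorization above reduces to $\P(b_{i_k}\text{ survives}\mid\mathcal{F}_k)=\theta(v_c)$ for each $k$. If $\theta(v_c)>0$, a conditional Borel--Cantelli-type argument using these positive conditional probabilities (with some care since ``$b_{i_k}$ survives'' depends on the future beyond $b_{i_k}$) should yield infinitely many surviving $b_{i_k}$ almost surely.

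I expect the main obstacle to be verifying $\theta(v_c)>0$. I would attempt this by contradiction: if $\theta(v_c)=0$, the preceding analysis forces only finitely many survivors almost surely. Since the trivial bound $\theta(\max\supp(\mu))=1$ (a first bullet at maximum velocity is never overtaken) gives $\P(b_0\text{ survives})\ge\mu(\{\max\supp(\mu)\})>0$, an appropriate translation-invariance or zero-one argument---possibly combined with Theorem~\ref{thm:main} applied to tail processes starting at potential survivors of velocity $v_c$---should upgrade this positive probability to ``infinitely many survivors almost surely,'' contradicting the assumption and yielding $\theta(v_c)>0$.
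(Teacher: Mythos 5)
Your first several steps are correct and essentially match the paper: by Theorem~\ref{thm:main} and finiteness of $\supp\mu$, the limsup defining $\hat v$ is attained, so almost surely $v_c\in\supp\mu$, $|PS_{>v_c}|<\infty$, and $|PS_{=v_c}|=\infty$; combining $|S_{>v_c}|\le|PS_{>v_c}|<\infty$ with $\theta(v)=0$ for $v<v_c$ (and the conditional factorization) shows any survivors beyond finitely many must have speed exactly $v_c$. The gap is in the two remaining steps, and they are the substance of the theorem.

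First, you flag $\theta(v_c)>0$ as ``the main obstacle'' but your proposed route does not close it. From $\theta(v_c)=0$ you would conclude $|S|<\infty$ a.s., and from $\theta(\max\supp\mu)=1$ you get $\P(|S|\ge 1)>0$; but these are perfectly compatible, and no translation-invariance or zero-one argument upgrades $\P(|S|\ge 1)>0$ to $\P(|S|=\infty)=1$ (the paper's Corollary~\ref{cor:zero_one} only says $\P(|S|=\infty)\in\{0,1\}$, not that it must equal $1$). The clean route, which is what the paper uses, is the contrapositive of Lemma~\ref{lem:stronger_half}: you have already established that $|PS_{>v_c}|<\infty$ with positive probability, and that lemma then says $v_c$ is fast, i.e.\ $\theta(v_c)>0$. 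Second, even granting $\theta(v_c)>0$, your conditional Borel--Cantelli sketch does not work as written: the event $\{b_{i_k}\text{ survives}\}$ is not $\mathcal F_{k+1}$-measurable (surviving behind depends on arbitrarily distant bullets), so the L\'evy form of Borel--Cantelli does not apply to the filtration you chose. The paper's Lemma~\ref{lem:inf_S} supplies the needed renewal construction: define stopping times where $\tau_{n+1}$ indexes the first $PS_{=v_c}$ bullet fired \emph{after} $\b_{\tau_n}$ has perished, so that $\{\tau_{n+1}<\infty\}\subseteq\{\b_{\tau_n}\text{ does not survive behind}\}$, giving $\P(\tau_{n+1}<\infty\mid\tau_n<\infty)\le 1-\theta(v_c)$ and hence a survivor in $PS_{=v_c}$ after any given index. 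That construction (or an equivalent one) is what you would need to supply to make the ``with some care'' step rigorous.
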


See Figure \ref{fig:finite} for an illustration of this fact in a discretization of the bullet problem setting. For general velocity distributions, we were not able to answer whether infinitely many bullets survive, but we do demonstrate a zero-one law in Corollary \ref{cor:zero_one}.

We also use Theorem \ref{thm:main} to obtain continuity results for $\theta$, which by definition is non-decreasing in $v$.

\begin{theorem} \label{thm:cont}
	For any valid $(\mu, \nu)$,
	\begin{enumerate}
		\item $\theta$ is right-continuous at all $v \ge 0$.
		\item For any $v > v_c(\mu, \nu)$, $\theta$ is discontinuous at $v$ if and only if $v$ is an atom of $\mu$. 
		\item Let $E$ denote the event that there are infinitely many potential survivors faster than $v_c = v_c(\mu, \nu)$. The following are equivalent:
			\begin{enumerate}
				\item $\theta$ is continuous at $v_c$. 
				\item $\theta(v_c) = 0$.
				\item $\P(E) > 0$.
				\item $\P(E) = 1$. 
			\end{enumerate}
	\end{enumerate}
\end{theorem}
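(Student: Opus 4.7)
The plan is to couple varying $v_0$ against a single fixed realization of $(v_i,\Delta_i)_{i\ge 1}$. Since increasing $v_0$ only pushes $b_0$ further ahead and never destroys any collision among the later bullets, survival of $b_0$ is monotone in $v_0$ for every realization, so there is a pathwise threshold
\[
V^\star := \inf\{v_0 \ge 0 : b_0 \text{ survives with initial speed }v_0\}.
\]
Writing $S(v_0)\in\{0,1\}$ for the survival indicator, a routine continuity-of-measure argument gives $\theta(v) = \P(V^\star<v)+\P(V^\star=v,\,S(v)=1)$ and hence
\[
\theta(v^+)-\theta(v) = \P(V^\star=v,\ S(v)=0),\qquad \theta(v)-\theta(v^-) = \P(V^\star=v,\ S(v)=1).
\]
All three parts become statements about the law of $V^\star$ and the behavior of $S$ at $V^\star$.

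For Part (1), the collision schedule of the joint process is piecewise constant in $v_0$ on a union of open intervals whose endpoints are the countably many $v_0$ at which two scheduled collision times coincide; triple collisions are ruled out by the valid-pair hypothesis. If $V^\star<\infty$ and $b_0$ were caught in finite time at $v_0=V^\star$, local stability of the schedule would force the same collision to persist on a right-neighborhood of $V^\star$, contradicting the definition of $V^\star$. Hence $S(V^\star)=1$ on $\{V^\star<\infty\}$, which kills the right-continuity error term.

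For Part (2), the same analysis reveals that the jump from $b_0$-caught to $b_0$-surviving at $v_0=V^\star$ is witnessed by a triple collision among $b_0$ and two later bullets $b_K,b_{K'}$. Solving the triple-collision constraint expresses $V^\star$ as a continuous rational function of $(v_K,v_{K'},\Delta_K,\Delta_{K'},\ldots)$ which is absolutely continuous with respect to the law of $v_K$ for generic choices of the remaining parameters; this forces $\P(V^\star=v)=0$ whenever $v$ is not an atom of $\mu$. For the converse direction, when $v>v_c$ is an atom of $\mu$ I would exhibit an explicit positive-probability configuration that produces $V^\star=v$.

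For Part (3), (a) $\Leftrightarrow$ (b) is immediate from Part (1) combined with $\theta\equiv 0$ on $[0,v_c)$, and (c) $\Leftrightarrow$ (d) is the zero-one law of Corollary~\ref{cor:zero_one}. For the main equivalence (b) $\Leftrightarrow$ (d) I would invoke Theorem~\ref{thm:main}: since $\limsup_n v_{j_n}=v_c$ almost surely, the complement $E^c$ means that only finitely many potential survivors exceed $v_c$ while infinitely many approach it at or from below. On $E$, the conditional independence of potential-survivor status and tail survival given $v_n$ (noted in the introduction) lets me sum tail-survival contributions $\theta(v_{j_n})$ along the fast subsequence; a conditional Borel--Cantelli argument then forces $\theta(v_c^+)=0$, and $\theta(v_c)=0$ follows by right-continuity. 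Conversely, on $E^c$ the last potential survivor with velocity above $v_c$ is well-defined, and translation-invariance of the process starting from that bullet gives positive probability that $b_0$ at speed exactly $v_c$ escapes all subsequent challengers, yielding $\theta(v_c)>0$. I expect this (b) $\Leftrightarrow$ (d) step to be the main obstacle, since translating the distributional equality $\theta(v_c)=0$ into the pathwise event $E$ requires careful bookkeeping of the only-conditional independence between potential-survivor status and tail survival across different indices, and is exactly where the pathwise content of Theorem~\ref{thm:main} has to do real work.
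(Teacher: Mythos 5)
Your Part (1) is essentially the paper's argument in different clothing: your $V^\star$ is the paper's $v_m(\omega)$, and your ``local stability'' observation is the paper's no-triple-collision argument showing $v_m > v$ on $\SurvFirst^c$. One small over-reach: you aim to prove $S(V^\star)=1$ almost surely as a pathwise statement, but for right-continuity at a fixed $v$ you only need $\P(V^\star=v,\ S(v)=0)=0$, which is a per-$v$ statement (the a.s.\ pathwise version does not follow from the per-$v$ version and isn't needed).

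Part (2) is where the gap is genuine. Your claim that the jump at $V^\star$ is always ``witnessed by a triple collision'' among $b_0$ and two later bullets, making $V^\star$ a rational function of the other parameters, does not cover all cases. The threshold can also be realized asymptotically: $V^\star$ can equal the speed of the first (equivalently, fastest) survivor of $B_{[1,\infty)}$, with $b_0$ escaping only because it never loses ground, not because of any collision geometry. In that branch $V^\star=v_K$ for a random index $K$, and showing $\P(V^\star=v)=0$ reduces to $v$ not being an atom of $\mu$ rather than to any absolute-continuity argument about collision constraints. The paper's proof handles both branches explicitly: when $S_{[1,\infty)}\neq\emptyset$ it uses that the first survivor's speed is strictly below $v$ almost surely (because $v$ is not an atom), and in either branch it appeals to finiteness of $(PS_{[1,\infty)})_{>v'}$ from Theorem~\ref{thm:stronger} to reduce to finitely many threats, each of which can be dodged by a slightly slower bullet. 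Your sketch also leaves the atom direction (``I would exhibit an explicit configuration'') entirely to be filled in; the paper uses the concrete event $\{v_1=v\}\cap\{\b_1\in S_{[1,\infty)}\}$, which already has probability $\mu(\{v\})\theta(v)>0$ and forces a jump of at least that size.

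Part (3) is correct in outcome but takes a long detour. Once Part (1) is in hand, (a)$\Leftrightarrow$(b) is immediate since $\theta\equiv 0$ on $[0,v_c)$. The remaining equivalences do not need any Borel--Cantelli or conditional-independence bookkeeping: Theorem~\ref{thm:stronger} already says that if $v_c$ is slow then $|PS_{>v_c}|=\infty$ a.s., and if $v_c$ is fast then $|PS_{>v_c}|<\infty$ a.s.; that single dichotomy gives (b)$\Leftrightarrow$(c)$\Leftrightarrow$(d) in one line. Your plan to get (c)$\Leftrightarrow$(d) from Corollary~\ref{cor:zero_one} is a mismatch: that corollary is a zero-one law for $|S|=\infty$, not for $|PS_{>v_c}|=\infty$; the relevant zero-one dichotomy for $PS$ is the one supplied by Theorem~\ref{thm:stronger} itself, whose proof you would be re-deriving.
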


Finally, we note that all of our results apply to \textit{one-sided ballistic annihilation}. Ballistic annihilation is a closely related process studied by physicists to model the kinetics of chemical reactions \cite{ballistic}. All particles are present at time $0$ and have i.i.d.\ spacings. As noted by Sidoravicius and Tournier in \cite{sidoravicius}, by inverting the axes on the time-space diagram, one-sided ballistic annihilation is equivalent to the bullet process with inverted velocities. Using the \textit{linear speed-change invariance property} from Section 2 of \cite{sidoravicius}, all of our results apply to one-sided ballistic annihilation with velocity distribution bounded above or below. 

\subsection{Definitions and Notation}\label{sec:definitions} 

A bullet is a triple 
\[
    \b_i = (i, v_i, \Delta_i),
\]
with index $i \ge 0$, velocity $v_i \ge 0$, and delay $\Delta_i > 0$ (we allow $v_i = 0$ for our definition of $\theta$). Given a deterministic sequence of bullets $\vec{\b}_\mathcal{J} = (\b_i)_{i \in \mathcal{J}}$, indexed by the finite or countable set of consecutive nonnegative integers $\mathcal{J} = \{m, m+1, \ldots\} \subseteq \mathbb{N}_{\ge 0}$, we define the bullet process 
\[
    B(\vec{\b}_\mathcal{J}) = \big(a_t(i), d_t(i)\big)_{t \ge 0,\, i \in \mathcal{J}}
\]
as follows, where $a_t(i) \in \{0,1\}$ denotes whether $b_i$ is active and $d_t(i)$ is the virtual trajectory of $b_i$, giving its position at times when it is active and ignored when it is not. Each bullet $b_i$ for $i \in \mathcal{J}$ is fired at time $t_{i} = \sum_{m < j \le i}\Delta_{j}$ (note that the first delay $\Delta_m$ plays no role for this sequence of bullets). Then we let $d_t(i) := v_i(t - t_i)$ for all $t \ge 0$. We let $a_t(i) := 0$ for all $0 \le t < t_i$, and at time $t_i$ switch $a_t(i)$ to $1$. It remains the case that $a_t(i) = 1$ until the first time, if ever, $b_i$ collides with another active bullet $b_j$ --- say at time $s > t_i$ --- in which case we set $a_t(i) := 0$ for all $t > s$. For $i,j \in \mathcal{J}$ with $i < j$, we let $\b_j \ra \b_i$ denote the event that $\b_j$ catches up to $\b_i$ and annihilates it.

For valid distribution pair $(\mu, \nu)$, let $\omega_i = (v_i, \Delta_i)$ for $i \ge 0$ be drawn i.i.d.\ from $\mu \otimes \nu$, and $\omega = (\omega_0, \omega_1, \ldots)$. We denote the full bullet sequence by
\[
    \vec{\b} = \vec{\b}(\omega) := \left(b_i(\omega)\right)_{i \ge 0}.
\]
We denote the bullet process on the full sequence by 
\[
    B = B(\omega) := B(\vec{\b}).
\]
The law of $ B$ is shift-invariant: with shift operator $T$ defined by $T(\omega_0, \omega_1, \ldots) =  (\omega_1, \omega_2, \ldots)$, we have $\vec{\b} \circ T = \vec{\b}(T\omega) \overset{d}{=} \vec{\b}$, and thus $B \circ T \overset{d}{=} B$.

We obtain subsequences of $\vec{\b}$ via the following indexing conventions:
\begin{align*}
	\vec{\b}_{n} &= (\b_0, \ldots, \b_n)\\
	\vec{\b}_{[m,n]} &= (\b_m, \ldots, \b_n)\\
	\vec{\b}_{[n,\infty)} &= (\b_n, \b_{n+1}, \ldots)
\end{align*}
In particular, $\vec{\b}_{n} = \vec{\b}_{[0,n]}$. For index $I$ of the form $[m,n]$, $[n,\infty)$ or $n$, we denote the bullet process on $\vec{\b}_I$ by $B_I := B(\vec{\b}_I)$. To specify which bullet process a collision takes place in, we use phrases like $\b_j \ra \b_i$ \emph{in} $B_n$ or \emph{in} $B_{[n,\infty)}$. By itself, $\b_j \ra \b_i$ means it occurs in $B$, the process on the full sequence of bullets.

We say $\b_i$ \textit{perishes} if $\b_j \to \b_i$ or $\b_i \to \b_j$ for some $j$, otherwise we say it \textit{survives}. Bullet $\b_j$ \textit{threatens} $\b_i$ if $\b_j \ra \b_i$ in $B_j$, which is a precondition for $\b_j$ annihilating $\b_i$ in $B$. The set of surviving bullets in $B$ is denoted 
\[
    S = S(\vec{\b}) := \{\b_i : \b_i \text{ survives in } B(\vec{\b})\}.
\]
The set of survivors for $B_I$ is denoted $S_I = S(\vec{\b}_I)$. We call $\b_n$ a
potential survivor if it survives in $B_n$, i.e.\ $\b_n \in S_n$, and let $PS = PS(\vec{\b})$ denote the set of potential survivors in $B$. Similarly, $PS_I$ is the set of potential survivors in $B_I$, where for instance $PS_{[n,\infty)}$ is the set of all $\b_{n'}$ that survive in $B_{[n,n']}$.  Almost surely $|PS| = \infty$ since $\b_n \in PS$ whenever it is the slowest bullet among $\vec{\b}_{[0,n]}$. So letting $(\b_{j_0}, \b_{j_1}, \ldots)$ enumerate $PS$ by increasing index $j_n$, we can define $\hat{v} = \limsup_{n \to \infty}v_{j_n}.$

The bullet $\b_n$ \textit{survives behind} if it survives in $B_{[n,\infty)}$. Note that $\b_n \in S$ if and only if it is a potential survivor and survives behind, so $S \subseteq PS$. 

For any set of bullets $A$, let $A_{>v}$ denote $\{\b_i \in A\colon v_i > v\}$, and similarly define $A_{\ge v}, A_{=v}$. So for instance $(S_{[n,\infty)})_{>v}$ is the set of survivors of $B_{[n,\infty)}$ with velocity greater than $v$.

We define $\theta(v)$ for all $v \ge 0$, not just $v \in \supp \mu$, as follows. To refer to the setting in which we fix the speed of $b_0$ to $v$, we let $\b^v_0 := (0, v, \Delta_0)$, and let $\SurvFirst$ denote the event that $\b_0^v$ survives in $B(\b^v_0, \vec{\b}_{[1,\infty)})$. Then we define
\[
    \theta(v) := \P(\SurvFirst).
\]
If $\theta(v) > 0$, we say $v$ or a speed-$v$ bullet is \textit{fast} or \textit{can survive}, and otherwise is \textit{slow} or \textit{can't survive}. While, as discussed above, necessarily $v_c\le \sup(\supp( \mu))$, it does not follow immediately by definition that $v_c \in \supp \mu$, or even that $v_c \ge \inf (\supp (\mu))$, but this is indeed the case by Theorem \ref{thm:main}, as we will establish.

\begin{remark}\label{rem:shifts}
	Note that $S_{[n, \infty)} = S_{[n, \infty)}(\omega)$ and $S \circ T^{n} = S(T^{n}\omega)$ agree up to a reindexing, that is they are in bijection under the mapping $\b_i \mapsto (i - n, v_i, \Delta_i)$. Thus they contain the same velocities, and likewise for $PS_{[n, \infty)}$ and $PS \circ T^{n}$. We then get $|(S_{[n, \infty)})_{\ge v}| = |S_{\ge v} \circ T^{n}| \overset{d}{=} |S_{\ge v}|$, along with other equations of that form.
\end{remark}

\subsection{Application to the Bullet Problem} \label{sec:application}
In this section, we outline a possible avenue of attack for the bullet problem by considering potential survivors. Note that $\b_n$ threatens $\b^v_0$ if and only if both: \begin{enumerate}[label=(\roman*), ref=(\roman*)]
	\item \label{cond-i} Bullet $\b_n$ is in $PS(\b_1, \ldots, \b_n)_{>v}$.
	\item Bullets $(\b_1, \ldots, \b_{n-1})$ mutually annihilate in $B_{[1, n-1]}$, i.e.\ $|S_{[1, n-1]}|=0$.
\end{enumerate}
 In \cite{broutinMarckert} it was proved that $\P(|S_{[1, n-1]}|=0) = O(n^{-1/2})$. Suppose it could be shown that adding condition~\ref{cond-i} increases the exponent to something summable for some $v < 1$, so $\b_0^v$ is threatened by only finitely many bullets $\b_n$ a.s. Then for large enough $v < 1$, no bullet threatens $\b_0^v$ and it thus survives.

\subsection{Proof Overview} 

To analyze surviving bullets, we prove Lemma \ref{lem:threats}, the finite threats lemma, stating that a bullet $\b_i$ can survive only if it is threatened a finite number of times. The argument is that every bullet that threatens $\b_i$ has at least a $\theta(v_i)$ probability of surviving behind, thus annihilating $\b_{i}$. We demonstrate that this eventually happens using a kind of renewal argument that we take advantage of throughout the paper. 

With the finite threats lemma we answer a question posed by Broutin and Marckert in \cite{broutinMarckert}. They explained that $|S_n|$ and $|S_{n+1}|$ differ by $1$, and also proved the exact distribution of $|S_n|$. They posited that $|S| = 0$ when $|S_n| = 0$ infinitely often, reducing the bullet problem to a recurrence question. This follows from Theorem \ref{thm:Marckert}.

To demonstrate $\hat{v}=v_c$, we prove a slightly stronger statement.

\begin{theorem} \label{thm:stronger}
For any valid $(\mu, \nu)$, if $v$ is slow then $PS_{>v}$ is infinite. If $v$ is fast then $PS_{>v}$ is finite.
\end{theorem}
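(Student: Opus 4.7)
The plan is to prove the two implications of Theorem \ref{thm:stronger} via the finite threats lemma (Lemma \ref{lem:threats}) and shift-invariance; both arguments go by contrapositive.

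For the direction that $v$ fast implies $|PS_{>v}|$ is finite a.s., suppose for contradiction that $\P(|PS_{>v}|=\infty) > 0$. Let $\tau$ be the first index $n$ with $\b_n \in PS$ and $v_n > v$; $\tau < \infty$ on the bad event. By the strong Markov property at $\tau$, the tail $B_{[\tau,\infty)}$ conditional on $\mathcal{F}_\tau$ is distributed as a fresh bullet process starting with $\b_0^{v_\tau}$. Hence the conditional probability that $\b_\tau$ survives behind equals $\theta(v_\tau) \ge \theta(v) > 0$, and since $\b_\tau \in PS$ plus surviving behind implies $\b_\tau \in S$, we get $\P(\b_\tau \in S \mid \tau < \infty) \ge \theta(v)$. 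On the other hand, I would argue that on $\{|PS_{>v}|=\infty\}$, $\b_\tau$ must almost surely be annihilated: each later potential survivor $\b_{j_k}$ with $v_{j_k} > v_\tau$ would, in its own truncation, have forced $\b_\tau$ to be killed (a faster bullet surviving in $B_{j_k}$ cannot have a slower $\b_\tau$ alive in front of it). The renewal-style argument underlying the finite threats lemma then shows that infinitely many such events a.s.\ produce an actual annihilation of $\b_\tau$, contradicting the positive conditional survival probability.

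For the direction that $v$ slow implies $|PS_{>v}|$ is infinite a.s., suppose $\P(|PS_{>v}| < \infty) > 0$. On this event there is a last potential survivor $\b_N$ with $v_N > v$; beyond $N$, no potential survivor has velocity exceeding $v$. I would argue $\b_N$ survives in $B$ with positive conditional probability, using the absence of further potential survivors above $v$ together with a shift-invariance / renewal argument to control the threats to $\b_N$ through the finite threats lemma. This yields $\theta(v_N) > 0$ for some $v_N > v$ with positive probability, which I would upgrade to $\theta(v) > 0$ either by a direct coupling of a speed-$v$ bullet to a speed-$v_N$ one, or by a limiting argument along velocities in $\supp(\mu)$ approaching $v$ from above (together with monotonicity of $\theta$).

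The main obstacle is the first direction, and specifically the step that turns "later potential survivor with velocity above $v_\tau$" into a concrete annihilating mechanism for $\b_\tau$. The difficulty is that $PS$ membership is a statement about the truncated process $B_n$, whereas actual annihilation in $B$ can differ because extra bullets may rescue $\b_\tau$ or destroy its would-be killers. I expect the paper's $\Shield$ construction, declared in the preamble, to be the key technical tool here, bridging potential survivorship across different truncations with events in $B$.
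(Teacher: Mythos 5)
Your proposal has the right ingredients in spirit (renewal arguments, the finite threats lemma, shift-invariance), but both directions contain genuine gaps, and you have in fact inverted which direction needs the $\Shield$ construction.

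For the direction ``$v$ fast $\Rightarrow$ $|PS_{>v}|<\infty$,'' the contradiction does not close. You deduce $\P(\b_\tau\in S\mid \tau<\infty)\ge\theta(v)>0$ and then argue that $\b_\tau$ is a.s.\ annihilated \emph{on the event} $\{|PS_{>v}|=\infty\}$. But $\{\tau<\infty\}$ is strictly larger than $\{|PS_{>v}|=\infty\}$, so combining the two merely gives $\P(\b_\tau\in S,\,|PS_{>v}|<\infty)>0$ --- no contradiction with $\P(|PS_{>v}|=\infty)>0$. Even upgrading to a chain of stopping times (the paper's Lemma \ref{lem:inf_S}) only yields $\P(|S_{>v}|=\infty)>0$, which still does not contradict $v$ being fast. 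The crucial missing idea is the \emph{zero-one law} $\P(|S_{>v}|=\infty)\in\{0,1\}$ (the paper's Lemma \ref{lem:zero_one}, proved via Birkhoff's ergodic theorem plus a ``one-bullet-added-to-the-front'' comparison, Lemma \ref{lem:delta_s}). Only with that upgrade to probability one can one say $(S_{[1,\infty)})_{>v}$ is a.s.\ non-empty, so $\theta(v)=0$, the contradiction. There is also a secondary issue in your mechanism: you invoke ``later potential survivors with velocity $>v_\tau$,'' but the hypothesis only supplies infinitely many with velocity $>v$, and $v_\tau$ may well exceed almost all of their velocities.

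For the direction ``$v$ slow $\Rightarrow$ $|PS_{>v}|=\infty$,'' your sketch stops at ``$\b_N$ survives with positive conditional probability.'' That step is not established: threats to $\b_N$ come from $(PS_{[N+1,\infty)})_{>v_N}$, not from $PS_{>v}$ itself, and controlling the former requires Theorem \ref{thm:Marckert} to compare $PS_{[N+1,\infty)}$ with $PS$. Moreover even if $\b_N$ survives for some random $v_N>v$, the passage to $\theta(v)>0$ is not a coupling of speed-$v$ to speed-$v_N$ (such a coupling is not monotone in a useful way). The paper's resolution here is precisely the $\Shield$ construction: pass first to the non-null event $E_0=\{|PS_{>v}|<\infty\}\cap\{|S|=0\}$, bound the range $D_M$ and max velocity $V_M$ of $PS_{>v}$ on a further sub-event $F$, and construct an explicit pattern $(\b_1,\dots,\b_{2m})$ of alternating slow/fast bullets that self-annihilate while $\b^v_0$ escapes past distance $d_M$ before any bullet indexed $>2m$ is even fired. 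This is the substantive technical content you would need to supply, and it belongs to this direction, not the ``fast'' direction as your last paragraph suggests.
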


The first statement of the theorem implies that if $v < v_c$ then $\hat{v} \ge v$ a.s., and the second statement implies that if $v > v_c$ then $\hat{v} \le v$ a.s.

The outline of the proof of the first half of Theorem~\ref{thm:stronger} is as follows: take $v$ such that $PS_{>v}$ is finite with positive probability, which tells us that $\b_0^v$ may face finitely many threats, as $(PS_{[1, \infty)})_{> v}$ may be finite. Using this, we construct a shield $(\b_1, \ldots, \b_m)$ following $\b_0^v$, which ensures its survival with positive probability by protecting it from $(PS_{[m+1, \infty)})_{> v}$. 

At this point, we remark how to show $\hat{v}=v_c$ for the specific context of the bullet problem, providing a simpler argument than required for the general context (we do the same later for finite velocity distributions). Roughly, to show $PS_{>v}$ cannot be infinite for any $v > v_c$, we argue that each potential survivor could be followed by a survivor with velocity in $(v_c, v)$, so this would eventually happen.

The proof sketch for the second half of Theorem~\ref{thm:stronger} proceeds as follows. Taking a fast velocity $v$, we seek to derive a contradiction from $|PS_{>v}|=\infty$. First, we will show this implies $|S_{>v}|=\infty$ by a renewal argument. After obtaining a zero-one law on the infinitude of $|S_{>v}|$, this will contradict that $v$ is fast, since then $\b_0^v$ is always followed by infinitely many faster survivors. To achieve the zero-one law, we use Birkhoff's ergodic theorem to guarantee that $|(S_{[k,\infty)})_{>v}|=\infty$ for some $k$. Finally, we will show that $S_{[k,\infty)}$ and $S_{[k-1,\infty)}$ differ by at most one bullet, analogous to the same fact about $S_n$ and $S_{n+1}$. 

\section{Proofs} \label{sec:proofs} 

We begin by proving the finite threats lemma. 

\begin{lemma} \label{lem:threats}(Finite threats lemma).
	If $\b_i$ survives then it is threatened by only finitely many bullets. 
\end{lemma}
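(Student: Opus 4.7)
My plan is proof by contradiction. Suppose $\b_i$ survives yet is threatened by an infinite sequence of bullets $\b_{j_1},\b_{j_2},\ldots$ with $j_1<j_2<\cdots$. Following the hint in the proof overview, the strategy is to show that each threat has uniformly positive conditional probability of destroying $\b_i$, so infinitely many threats would force $\b_i$'s annihilation. First I reduce to $\theta(v_i)>0$: since $\b_i$ surviving forces $\b_i \in S_{[i,\infty)}$, and by Remark~\ref{rem:shifts} this has conditional probability $\theta(v_i)$ given $v_i$, the event $\{\b_i\text{ survives}\}\cap\{\theta(v_i)=0\}$ has probability zero. Every threatening $\b_{j_k}$ satisfies $v_{j_k}>v_i$ (a faster bullet must catch a slower one), so by monotonicity of $\theta$ we get $\theta(v_{j_k})\ge\theta(v_i)>0$.

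The key pathwise claim to establish is: if $\b_j$ threatens $\b_i$ and $\b_j \in S_{[j,\infty)}$, then $\b_j \to \b_i$ in $B$. The intuition is that $\b_j$ surviving behind rules out any bullet in the shifted process $B_{[j,\infty)}$ from intercepting it; inserting the earlier bullets $\b_0,\ldots,\b_{j-1}$ back into the process can only consume potential interceptors (by offering them new targets up ahead) rather than create new ones, and symmetrically no later bullet can reach $\b_i$ before $\b_j$ does without first intercepting $\b_j$ in $B_{[j,\infty)}$. The precise argument likely proceeds by tracking the minimal-index discrepancy between the collision patterns of $B$ and $B_{[j,\infty)}$ and showing it leads to an infinite descent.

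Given the claim, I close with a renewal / conditional Borel--Cantelli argument. The threat event lies in $\mathcal{F}_{j_k}:=\sigma(\omega_0,\ldots,\omega_{j_k})$, while conditional on $\mathcal{F}_{j_k}$ the survives-behind event $A_k:=\{\b_{j_k}\in S_{[j_k,\infty)}\}$ has probability $\theta(v_{j_k})\ge\theta(v_i)$, since $A_k$ depends only on $v_{j_k}$ and the independent tail $(\omega_{j_k+1},\omega_{j_k+2},\ldots)$ together with shift-invariance. Iterating conditionally — on each failure, $\b_{j_k}$ is destroyed at some finite time by some bullet of finite index, after which shift-invariance furnishes a fresh copy of the process for the next threat — yields $\P(A_1^c\cap\cdots\cap A_K^c\mid \text{at least }K\text{ threats})\le(1-\theta(v_i))^K\to 0$. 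Hence almost surely some $A_k$ holds, and by the pathwise claim this annihilates $\b_i$, contradicting our assumption.

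The main obstacle is the pathwise claim. A priori, inserting the earlier bullets back could substantially rearrange the collision pattern among later bullets: freeing one late bullet from its destined collision could in turn free another, and so on. The proof must carefully verify that no such rearrangement can produce a new bullet capable of intercepting $\b_j$ before it reaches $\b_i$, nor allow a rogue later bullet to hit $\b_i$ ahead of $\b_j$. Untangling these cascades cleanly — and packaging the conditional independence needed for the renewal step — is where the technical work will lie.
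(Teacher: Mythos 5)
Your high-level strategy matches the paper's: threats must be faster than $v_i$, each has a uniformly positive chance $\ge\theta(v_i)$ of surviving behind, and when one does it kills $\b_i$. But the renewal step is where the real work is, and your formulation of it has a gap.

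The inequality $\P(A_1^c\cap\cdots\cap A_K^c\mid \text{at least }K\text{ threats})\le(1-\theta(v_i))^K$, with $A_k=\{\b_{j_k}\in S_{[j_k,\infty)}\}$ indexed by the consecutive threats $j_1<j_2<\cdots$, does not follow from the observation that $\P(A_k\mid\mathcal{F}_{j_k})\ge\theta(v_i)$. The problem is that $A_k^c$ is typically \emph{not} $\mathcal{F}_{j_{k+1}}$-measurable: the next threat $\b_{j_{k+1}}$ may well be fired long before the fate of $\b_{j_k}$ in $B_{[j_k,\infty)}$ is resolved, so the tails of randomness determining $A_k$ and $A_{k+1}$ overlap heavily, and the events can be strongly positively correlated (e.g.\ a burst of fast bullets farther back kills many threats at once). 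Your parenthetical ``after which shift-invariance furnishes a fresh copy of the process for the next threat'' is exactly the right intuition, but it is inconsistent with summing over \emph{all} $j_k$: the ``next threat'' you actually need is the first one fired after $\b_{j_k}$ dies, which may skip several of the $j_k$'s. The paper makes this precise by defining a subsequence of stopping times $\tau_1\le\tau_2\le\cdots$ with $\tau_{n+1}$ the first threat fired after $\b_{\tau_n}$ is killed (and $\tau_{n+1}=\infty$ otherwise). Since $\{\tau_n<\infty\}\in\mathcal{F}_{\tau_n}$ and the post-$\tau_n$ coordinates are fresh, one gets $\P(\tau_{n+1}<\infty\mid\tau_n<\infty)\le\E[1-\theta(v_{\tau_n})\mid\tau_n<\infty]\le 1-\theta(v_i)$, hence $\P(\tau_{n+1}<\infty)\le(1-\theta(v_i))^n\to 0$. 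One must then add the closing observation that once there is a last finite $\tau_n$, either $\b_{\tau_n}$ is not killed from behind (so it annihilates $\b_i$) or it is, after which no further threat is fired — either way the full list of threats is finite.

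On your ``key pathwise claim'': you are right that some monotonicity is being invoked, and the paper indeed uses it silently (``either $\b_{\tau_n}$ is not annihilated from behind, in which case it annihilates $\b_i$''). But you overestimate the difficulty. It is not an infinite-descent cascade; it is the standard local observation that the dynamics among $\b_j,\b_{j+1},\ldots$ in $B$ coincide with those in $B_{[j,\infty)}$ until the instant $\b_j$ first collides (since no bullet with index $>j$ can interact with one of index $<j$ while $\b_j$ is alive), together with the symmetric fact that the dynamics among $\b_0,\ldots,\b_j$ in $B$ coincide with those in $B_j$ until a bullet from behind reaches $\b_j$. Combining the two: if $\b_j$ threatens $\b_i$ and survives in $B_{[j,\infty)}$, then in $B$ it is never hit from behind before reaching $\b_i$, and the front dynamics are those of $B_j$, in which $\b_j\to\b_i$. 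So $\b_j\to\b_i$ in $B$, as you wanted. This is a short deterministic argument, not the ``main obstacle.''
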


Note then that all bullets face finitely many threats: if $\b_i$ is annihilated, say at time $t$ in $B(\vec{\b})$, then $\b_n$ cannot threaten $\b_i$ whenever $t_n > t$. 

\begin{proof}
	It suffices to show that any bullet $\b_i$ either faces finitely many threats or does not survive. 
	
	For $\b_i$ to have positive probability of survival, we must have that $\b_i \in PS$ and $\theta(v_i) = \theta(v_i, \mu, \nu) > 0$, so assume this is the case. We construct an infinite sequence of stopping times $\tau_1 \le \tau_2 \le \ldots$ taking values in $\mathbb{N} \cup \{\infty\}$, such that $\b_{\tau_n}$ threatens $\b_i$ whenever $\tau_n < \infty$.  Let $\b_{\tau_1}$ be the first bullet that threatens $\b_i$ (if there is none then $\b_i$ faces finitely many threats and we are done). For any $\tau_n<\infty$, let $\b_{\tau_{n+1}}$ be the first bullet to threaten $\b_i$ fired after $\b_{\tau_n}$ is killed, and if no such $\b_{\tau_{n+1}}$ exists let $\tau_{n+1}=\infty$, and thus $\tau_{n+k} = \infty$ for all $k \ge 1$. We have 
	\begin{align*}
		\P(\tau_{n+1} < \infty \mid \tau_n < \infty) 
		&\le \P(\b_{\tau_n} \text{does not survive behind} \mid \tau_n < \infty) \\
		&= \E[1 - \theta(v_{\tau_n}) \mid  \tau_n < \infty] \\
		&< 1 - \theta(v_i),
	\end{align*}
	where for the last inequality we used that $\theta$ is non-decreasing in $v$, and that $v_{\tau_n} > v_i$ on $\tau_n < \infty$, since then $\b_{\tau_n}$ threatens $\b_i$. 
	
	Thus,
	\[
		\P(\tau_{n+1} < \infty) \le \big(1 - \theta(v_i)\big)^n \to 0.
	\]
	Almost surely then, there is a last $\tau_{n} < \infty$. Either $\b_{\tau_n}$ is not annihilated from behind, in which case it annihilates $\b_i$, or it is, after which time no bullet fired threatens $\b_i$.
\end{proof}

With the finite threats lemma we establish the following connection between $S_n$ and $S$, relating survivors among infinite bullets to finite truncations of the process. For a set $A$ of bullets, denote its characteristic function by $\chi_A$, with $\chi_A(\b_i) = 1$ for $\b_i \in A$ and $\chi_A(\b_j) = 0$ for $\b_j \notin A$.

\begin{theorem} \label{thm:Marckert}
	\leavevmode
	\begin{enumerate}
		\item $\chi_{S_n}\to \chi_S$ pointwise.
		\item $|S| = \underset{n \to \infty}{\liminf}\,|S_n|$.
	\end{enumerate}
\end{theorem}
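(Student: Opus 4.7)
Fix a bullet $\b_i$ and consider the $\{0,1\}$-valued sequence $(\chi_{S_n}(\b_i))_{n \geq i}$. My goal for (1) is to show this sequence changes value only finitely many times almost surely, converging to $\chi_S(\b_i)$. Any change between $B_n$ and $B_{n+1}$ is caused by the arrival of $\b_{n+1}$ at time $t_{n+1}$: either $\b_{n+1}$ catches $\b_i$ directly in $B_{n+1}$ (so $\b_{n+1}$ threatens $\b_i$), or $\b_{n+1}$ initiates a cascade of reordered collisions that propagates down to $\b_i$. In the cascade case, $\b_{n+1}$ kills some $\b_{j_1}$ that was going to kill $\b_{j_2}$ in $B_n$; the now-alive $\b_{j_2}$ then kills $\b_{j_3}$ in $B_{n+1}$; and so on, with strictly decreasing indices along the chain, terminating at $\b_i$. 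Each link of the chain is a threat relationship, so iterating Lemma~\ref{lem:threats} through these links I would bound the set of possible $\b_{n+1}$ initiating such chains ending at $\b_i$, proving only finitely many flips a.s.

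Having established $\chi_{S_n}(\b_i)$ converges to some $L(\b_i) \in \{0,1\}$, I would identify $L(\b_i) = \chi_S(\b_i)$ as follows. When $\b_i \in S$, the threats to $\b_i$ in $B$ form a finite set (Lemma~\ref{lem:threats}), each annihilated in $B$ at finite time by killer bullets whose own threats are finite, yielding a finite time $T$ and a finite relevant index set. For $n$ exceeding this set with $t_{n+1} > T$, the processes $B_n$ and $B$ exhibit the same protective dynamics around $\b_i$, giving $\b_i \in S_n$. When $\b_i \notin S$, the killer of $\b_i$ in $B$ and its finite causal ancestry similarly appear in $B_n$ for $n$ large, forcing $\b_i \notin S_n$.

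For (2), the lower bound $\liminf_n |S_n| \geq |S|$ follows from (1) and Fatou's lemma applied to $|S_n| = \sum_i \chi_{S_n}(\b_i)$:
\[
\liminf_n |S_n| \geq \sum_i \liminf_n \chi_{S_n}(\b_i) = \sum_i \chi_S(\b_i) = |S|.
\]
The upper bound in the case $|S| = \infty$ follows immediately from (1). When $|S| = m < \infty$, write $S = \{\b_{i_1}, \ldots, \b_{i_m}\}$; by (1), $\{\b_{i_1}, \ldots, \b_{i_m}\} \subseteq S_n$ for $n$ large, so $|S_n| \geq m$. Supposing for contradiction that $|S_n| \geq m + 1$ for all large $n$, each extra bullet $\b_j \in S_n \setminus S$ eventually exits $S_n$ permanently by (1); combined with the Broutin--Marckert observation that $|S_n|$ and $|S_{n+1}|$ differ by $1$, this eviction drives $|S_n|$ down to $m$ at some step, contradicting the assumption. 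Hence $|S_n| = m$ infinitely often, and $\liminf_n |S_n| \leq m$.

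\textbf{Main obstacle.} The cascade analysis in (1) is the hard part: iterating Lemma~\ref{lem:threats} through chains of a priori unbounded length requires care to ensure the total count of flip-triggers remains finite almost surely, despite cascades passing through widely varying intermediate bullets at different steps.
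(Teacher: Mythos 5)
Your proposal misidentifies where the work lies, and this is the main issue. The cascade analysis you flag as the ``main obstacle'' is unnecessary: you don't need to first prove ``finitely many flips'' and separately identify the limit, because your own identification step (when done carefully) directly shows $\chi_{S_n}(\b_i)=\chi_S(\b_i)$ for all large $n$, which \emph{is} the convergence statement. This is exactly the paper's proof, and it's short. Concretely: if $\b_i\notin S$, it is annihilated at some finite time $t$, and since $B_n$ and $B$ exhibit identical dynamics up to time $t_n$, we have $\b_i\notin S_n$ once $t_n>t$. If $\b_i\in S$, then by Lemma~\ref{lem:threats} only finitely many bullets $\b_r$ threaten $\b_i$; each such $\b_r$ must die from behind in $B$ (else it would kill $\b_i$), say by some time $t$; then for $t_n\ge t$ every threat $\b_r$ with $r\le n$ dies from behind in $B_n$ just as in $B$, and bullets with $r>n$ never threatened $\b_i$, so $\b_i\in S_n$. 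Your phrasing about ``killer bullets whose own threats are finite'' and ``finite causal ancestry'' adds structure the argument doesn't need; all you require is that the killers of the threats are fired before time $t$.

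On part (2): the Fatou lower bound matches the paper. For the upper bound, your contradiction argument is in the right spirit but leaves a real gap as written. Saying ``each extra bullet eventually exits $S_n$ permanently'' plus ``$|S_n|$ steps by $\pm 1$'' does not by itself preclude $|S_n|\ge m+1$ forever, since new extras keep arriving. To make it work, you need the specific Broutin--Marckert structure that $S_{n+1}$ is either $S_n$ minus its last element or $S_n\cup\{\b_{n+1}\}$, so that once removed a bullet is gone for good and removals happen only at the ``top''; then when the lowest-indexed extra $\b_{j_1}>\b_{s_m}$ is finally removed, the step just before that has $S_n=S\cup\{\b_{j_1}\}$ of size $m+1$, giving $|S_{n+1}|=m$. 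You'd also need the (true, but unstated) fact that for $n$ large the last element of $S_n$ has index strictly greater than $s_m$. The paper sidesteps all of this with a direct construction: observe $S_{s_k}=S$ (no bullet $\b_j$ with $j>s_k$ can reach past the immortal $\b_{s_k}$), let $\b_{r_1}$ be the killer of $\b_{s_k+1}$, check $S_{r_1}=S$, and iterate to get $S_{r_j}=S$ for infinitely many $r_j$. That gives $\liminf|S_n|\le|S|$ without any contradiction argument or appeal to Lemma~\ref{lem:delta_s}.
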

\begin{proof}
	\leavevmode
	\begin{enumerate}
		\item\label{thm:Marckert-1}  If $\b_i \notin S$, it is annihilated at some time $t$, so $\b_i \notin S_n$ whenever $t_n > t$. Conversely, suppose $\b_i \in S$. Then $\b_i$ is a potential survivor, and by the finite threats lemma is threatened by only finitely many bullets $\b_r$. All the $\b_r$ are killed from behind by some time $t$. For any $n$ with $t_n \ge t$ then, we have $\b_i \in S_n$, since $\b_i$ can only be annihilated in $B_n$ by some $\b_r$ that threatens it, but all such $\b_r$ die from behind in $B_n$.
		
		\item\label{thm:Marckert-2} By Fatou's lemma and item~\ref{thm:Marckert-1}, we have $\liminf_{n \to \infty}|S_n| \ge |S|$. For the other direction, suppose $S = \{\b_{s_1}, \ldots, \b_{s_k}\}$ for $k < \infty$ with $s_1 < s_2 < \ldots s_k$. Then $S_{s_k} = S$. As in Figure~\ref{fig:marckert}, letting $l_1 = s_{k}+1$, and $r_1$ be the index of the bullet that kills $\b_{l_1}$, we again have $S_{r_1} = S$. With $l_2 = r_1+1$, and $\b_{r_2}$ killing $\b_{l_2}$, we have $S_{r_2} = S$. Repeating this, $S_n = S$ infinitely often, so $\liminf_{n \to \infty}|S_n|\le |S|$. 
	\end{enumerate}  
\end{proof}

\begin{figure}[tbp]
	\centering
	\includegraphics[width=0.75\linewidth]{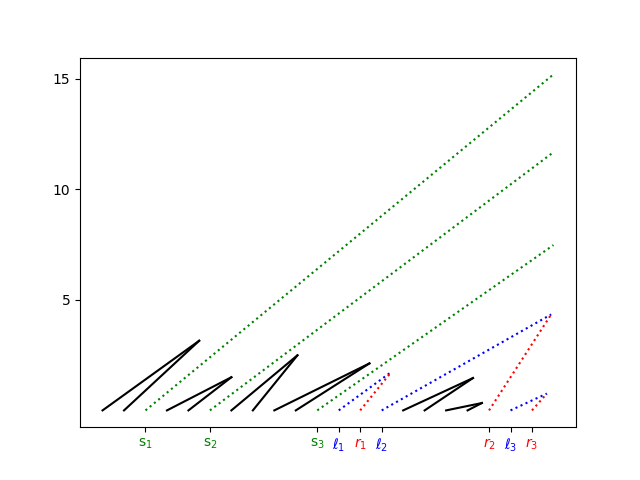}
	\caption{A bullet process with k=3 survivors, $S=\{\b_{s_1}, \b_{s_2}, \b_{s_3}\}$. We have $S_{r_n} = S$ for each $r_n$.}
	\label{fig:marckert}
\end{figure}

We can now prove half of Theorem 
\ref{thm:stronger}.

\Needspace{12\baselineskip}
\begin{lemma} \label{lem:stronger_half}
	If $v$ is slow then $|PS_{>v}| = \infty$.
\end{lemma}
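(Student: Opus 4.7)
The plan is to prove the contrapositive: assuming $\P(|PS_{>v}| < \infty) > 0$, I will show $\theta(v) > 0$.

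First, by countable additivity, choose $K \in \mathbb{N}$ with $p := \P(|PS_{>v}| \le K) > 0$. The shift invariance in Remark~\ref{rem:shifts} then gives $\P(|(PS_{[m+1, \infty)})_{>v}| \le K) = p$ for every $m \ge 0$, so with positive probability only boundedly many tail bullets are candidate threats to $\b_0^v$.

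Next, for some $m > K$, I would construct a \emph{shield event} $\Shield$ depending only on $(\omega_1, \ldots, \omega_m)$ with $\P(\Shield) > 0$, such that on the joint event $\Shield \cap \{|(PS_{[m+1,\infty)})_{>v}| \le K\}$, the bullet $\b_0^v$ survives in $B$. A natural candidate is a tuple $(\omega_1, \ldots, \omega_m)$ with strictly decreasing velocities $v_1 > v_2 > \cdots > v_m$ lying in a small interval just below $v$ (chosen so that this interval meets $\supp \mu$ in positive measure), together with small delays. Inside $B(\b_1, \ldots, \b_m)$ no shield bullet catches another and none catches $\b_0^v$, so the shield forms a convoy trailing $\b_0^v$. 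Because $\Shield$ is measurable with respect to $(\omega_1, \ldots, \omega_m)$ while $\{|(PS_{[m+1,\infty)})_{>v}| \le K\}$ is measurable with respect to $(\omega_{m+1}, \omega_{m+2}, \ldots)$, the two events are independent, and
\[
    \theta(v) \;\ge\; \P(\Shield) \cdot p \;>\; 0.
\]

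The hard step is verifying that the shield really does protect $\b_0^v$. A bullet $\b_n$ with $n > m$ annihilates $\b_0^v$ only if it survives in $B(\b_1, \ldots, \b_n)$ with $v_n > v$, and the shield is meant to force every such $\b_n$ to collide with a shield bullet first. The subtlety I expect to be the main obstacle is that tail bullets \emph{not} in $(PS_{[m+1,\infty)})_{>v}$ (in particular bullets with $v_n \in (v_m, v)$) can still erode the shield without ever threatening $\b_0^v$ themselves, so naively the shield can be chewed through by infinitely many eroders even when there are at most $K$ true threats. Overcoming this will probably require a careful collision-accounting argument: every eroder that reaches the shield in $B$ must correspond to a collision that was ``missed'' in $B_{[m+1,\infty)}$, and such missed collisions should pair up in a way that prevents the erosion count from exceeding the threat count by too much. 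Tuning the shield interval $(v_-, v)$ narrowly enough, and possibly choosing $m$ much larger than $K$, should let the finite-threats lemma (Lemma~\ref{lem:threats}) and the renewal idea used in its proof close the argument.
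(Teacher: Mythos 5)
Your high-level plan---prove the contrapositive by constructing a shield event $\Shield$, independent of a positive-probability tail event $F$, so that $\theta(v) \ge \P(\Shield)\P(F) > 0$---is exactly the paper's strategy, and the use of Remark~\ref{rem:shifts} to push the tail event forward is also right. But your shield design does not work, and you correctly identify the reason yourself: a convoy of bullets just below speed $v$ can be eroded by tail bullets that are not threats to $\b_0^v$ (any potential survivor of $B_{[m+1,\infty)}$ with speed in $(v_m, v)$ is an eroder, and nothing in your conditioning event $\{|(PS_{[m+1,\infty)})_{>v}| \le K\}$ controls how many such eroders there are). Your closing suggestion that a ``collision-accounting argument'' plus Lemma~\ref{lem:threats} should close the gap is not something that can be carried out as stated, so the proposal has a genuine hole precisely at the step you flagged.

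The paper's shield avoids erosion by a different mechanism, built on two quantitative controls on the tail that your proposal lacks. Rather than bounding only the \emph{count} of tail threats, the paper works with a refined non-null event $F \subseteq E_0 := E \cap \{|S| = 0\}$ (establishing that $E_0$ is non-null is itself a nontrivial step, using Theorem~\ref{thm:Marckert} and a shift argument, and is absent from your proposal), on which one can fix a bound $d_M$ on the furthest distance any tail threat ever reaches and a bound $v_M$ on the maximum tail-threat velocity, with $\mu([v_M,\infty)) > 0$. The shield then consists of $2m$ bullets in alternating slow--fast pairs, $v_{2k-1} \le v$ and $v_{2k} \ge v_M$, with $m$ chosen so that $t_{2m+1} > d_M/v$ with positive probability. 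This shield is deliberately \emph{self-annihilating} ($\b_{2k} \ra \b_{2k-1}$) rather than a persistent convoy: each fast bullet $\b_{2k}$, having speed at least $v_M$, cannot be caught by anything in $(PS_{[2m+1,\infty)})_{>v}$, so the internal shield collisions survive passage to the full process, and by the time the shield has consumed itself $\b_0^v$ has already advanced past $d_M$, out of reach of every tail threat. The velocity cap $v_M$ (making the shield impenetrable from behind) and the distance cap $d_M$ (making a merely temporary shield sufficient) are exactly the two ideas you would need to defeat the eroders you worried about.
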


\begin{corollary} \label{cor:vhat_ge_v_c}
	$\hat{v} \ge v_c$. 
\end{corollary}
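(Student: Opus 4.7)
The plan is to derive the corollary directly from Lemma~\ref{lem:stronger_half} by approximating $v_c$ from below. The corollary is vacuous when $v_c = 0$, so assume $v_c > 0$ and fix any $v \in [0, v_c)$. By the definition of $v_c$ as an infimum, we have $\theta(v) = 0$, i.e.\ $v$ is slow, so the lemma gives $|PS_{>v}| = \infty$ almost surely.

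Now I would translate this into a lower bound on $\hat{v}$. With $(\b_{j_0}, \b_{j_1}, \ldots)$ the enumeration of $PS$ in increasing index order, the condition $|PS_{>v}| = \infty$ means that $v_{j_n} > v$ for infinitely many $n$, so
\[
    \hat{v} = \limsup_{n \to \infty} v_{j_n} \ge v.
\]
To conclude, pick any sequence $v^{(k)} \nearrow v_c$ with $v^{(k)} < v_c$ (for instance $v^{(k)} = v_c - 1/k$ when $v_c < \infty$, or $v^{(k)} = k$ when $v_c = \infty$). On the almost-sure event that $\hat{v} \ge v^{(k)}$ holds for every $k$ simultaneously (a countable intersection of almost-sure events), we obtain $\hat{v} \ge v_c$.

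There is no real obstacle here beyond making sure the approximation argument and the countable intersection are handled cleanly; all the work is already done by Lemma~\ref{lem:stronger_half}.
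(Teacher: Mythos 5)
Your proposal is correct and follows essentially the same route as the paper's own proof: fix $v < v_c$, note $v$ is slow (using monotonicity of $\theta$), apply Lemma~\ref{lem:stronger_half} to get $|PS_{>v}| = \infty$ hence $\hat{v} \ge v$, and then let $v \nearrow v_c$. You are simply more explicit about the countable intersection of almost-sure events, a detail the paper leaves implicit.
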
 

\begin{proof}[Proof of Corollary \ref{cor:vhat_ge_v_c}]
	For any $v < v_c$, $v$ is slow. By Lemma \ref{lem:stronger_half} then, there are infinitely many potential survivors faster than $v$, so $\hat{v} \ge v$. 
\end{proof}

\begin{proof}[Proof of Lemma \ref{lem:stronger_half}]
	Take $v \ge 0$ such that $E := \{|PS_{>v}| < \infty\}$ has positive probability. We show that $v$ is fast. 
	
	To do so, we construct a non-null event $\Shield$, measurable with respect to $(\omega_1, \ldots, \omega_{2m})$ for suitably chosen $m$, as well as a non-null event $F \subseteq E$, such that 
	\begin{equation}\tag{*}\label{eq:shield}
		\SurvFirst \supseteq \Shield \cap T^{-(2m+1)}F.
	\end{equation}
	 Figure \ref{fig:constr} depicts these events. Then since $\Shield$ and $T^{-(2m+1)}F$ are independent, the latter depending only on $(\omega_{2m+1}, \omega_{2m+2}, \ldots)$, we have
	\begin{align*}
		\theta(v) 
			&= \P(\SurvFirst)\\
			&\ge \P(\Shield \cap T^{-(2m+1)}F) \\
			&= \P(\Shield)\P(T^{-(2m+1)}F)\\
			&= \P(\Shield)\P(F) \\
			&> 0,
	\end{align*}
	and thus $v$ is fast. It remains to construct such non-null $\Shield$ and $F$ and demonstrate \eqref{eq:shield}.
	
	\begin{figure*}
		\centering
		\begin{subfigure}{\textwidth}
			\centering
			\includegraphics[width=.68\linewidth]{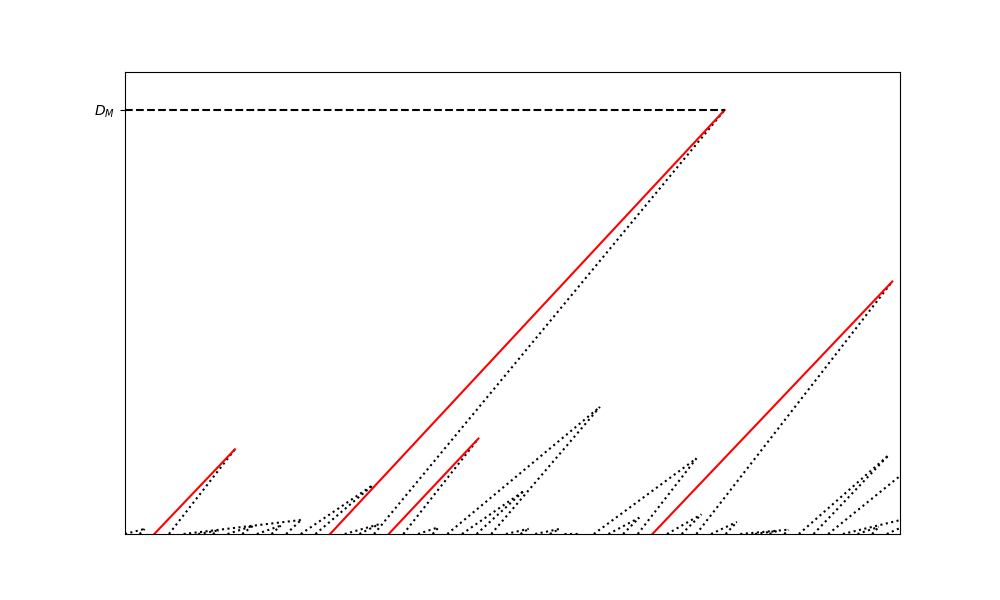}
		\end{subfigure}%
		
		\vspace*{-6mm}
		
		\begin{subfigure}{\textwidth}
			\centering
			\includegraphics[width=.68\linewidth]{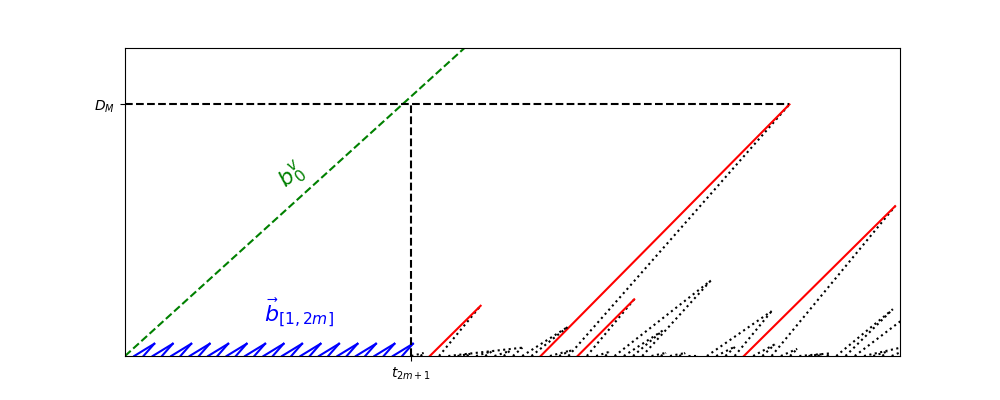}
		\end{subfigure}
		\caption{Above we depict $B(\omega)$ for $\omega \in F$. There are $4$ bullets in $PS_{>v}$, whose paths are drawn in red. Below, the path of $\b^v_0$ is green, and in blue is $\vec{\b}_{[1, 2m]}$, which shields $\b^v_0$ from its $4$ threats.}
		\label{fig:constr}
	\end{figure*}
	
	 We first show $E_0 := E \cap \{|S| = 0\}$ is non-null. To do so, we show that on $E$ there is always some random $k \ge 0$ such that $T^{-k} E_0$ holds. Therefore, $T^{-k} E_0$ must be non-null for at least some fixed $k \ge 0$, and then $E_0$ is non-null by shift invariance. So take $\omega \in E$, i.e.\ $\vec{\b}(\omega)$ is such that $|PS_{>v}| < \infty$.  If $S$ contains any bullets with velocity $\le v$, then we already know $v$ is fast, so assume that almost surely this is not the case. Then $S = S_{>v} \subseteq PS_{>v}$ is finite. Letting $\b_s$ be the last survivor in $S$, we show that $T^{-s} E_0$ holds, that is $|S_{[s+1,\infty)}|=0$ and $|(PS_{[s+1,\infty)})_{>v}| < \infty$ (see Remark~\ref{rem:shifts}). First off, $B_{[s+1,\infty)}$ has no survivors, as any such survivor would either survive in $B(\vec{\b})$ or annihilate $\b_s$, neither of which happens. To see $|(PS_{[s+1,\infty)})_{>v}| < \infty$, by Theorem~\ref{thm:Marckert} part~\ref{thm:Marckert-1}, we have $\b_s \in S_n$ for large $n$, at which point if $\b_n \in S_{[s+1,n]}$ — i.e.\ $\b_n \in PS_{[s+1,\infty)}$ — then $\b_n \in S_n$ — i.e.\ $\b_n \in PS$. Therefore, $PS_{[s+1,\infty)} \setminus PS$ is finite, so $(PS_{[s+1,\infty)})_{>v}$ is as well. This establishes that $E_0$ is non-null.
	
	We define $F$ as a subevent of $E_0$. For $\omega \in E_0$, let $D_M(\omega)$ be the furthest location any member of $PS_{>v}$ ever reaches, and let $V_M(\omega) > v$ denote the maximum velocity in $PS_{>v}$; we may assume $PS_{>v}$ is almost surely non-empty, as otherwise $v$ is fast since $\b_0^v$ is never threatened when $PS_{[1, \infty)}$ is empty. Choose $d_M,v_M$ so that with positive probability $d_M \ge D_M$ and $v_M \ge V_M$, moreover choosing $v_M$ so that $\mu\big([v_M,\infty)\big) > 0$. Let $F \subseteq E_0$ denote that these bounds on $D_M, V_M$ are satisfied. 
	
	To define $\Shield$, we choose $m$ large enough such that $t_{2m+1} > d_M / v$ with positive probability, and let $\Shield$ be the event that $t_{2m+1} > d_M / v$ and that for each $1 \le k \le m$, we have $v_{2k-1} \le v$ and $v_{2k} \ge v_M$. This event is non-null by choice of $v_M$ and $m$, and because $\mu\big((0,v]\big)$ must be positive since $PS_{>v}$ is possibly finite. It remains to show ~\eqref{eq:shield}.
	
	On $\Shield \cap T^{-(2m + 1)} F$, the truncated process $B(\b_0^v, \vec{\b}_{[1, 2m]})$ has collisions $\b_{2k} \ra \b_{2k-1}$ for $1 \le k \le m$ and $\b_0^v$ survives. These collisions still occur in the full process $B(\b_0^v, \vec{\b}_{[1, \infty)})$: each $\b_{2k}$ can only be threatened in the full process by some member of $(PS_{[2m+1, \infty)})_{>v_M}$, which is empty. Therefore, $\b_0^v$ can only be annihilated in the full process by a member of $(PS_{[2m+1, \infty)})_{>v}$, so this collision would have to take place at some location $x \le d_M$. But $\b^v_0$ reaches $d_M$ before any bullet in $\vec{\b}_{[2m+1, \infty)}$ is fired, and therefore survives.
\end{proof}

\begin{remark} \label{rem:canonical}
	We quickly state how to prove $\hat{v} = v_c$ in the context of the bullet problem. It remains to show $\hat{v} \le v_c$. Suppose by way of contradiction that $\hat{v} > v_c$. We force a survivor in $(v_c, \hat{v})$. Take $v,v'$ with $v_c < v < v' < \hat{v}$. Since $v$ is fast, $\theta(v) > 0$, and so each member of $PS_{>v'}$ has at least a $(v' - v)\theta(v) > 0$ chance of being followed by a survivor $\b_s$ with $v_s \in (v, v')$. Since $|PS_{>v'}|=\infty$, this eventually happens. (To rigorously show this, one can define an increasing sequence of indices $\tau_n$ in $PS_{>v'}$ where if $\b_{\tau_n}$ is followed by a bullet with velocity in $(v, v')$,  then $\b_{\tau_{n+1}}$ is fired after that bullet is annihilated). But then $\b_s \in S_n$ for large $n$, so $\b_n \notin PS_{>v'}$.
\end{remark}

Remark~\ref{rem:canonical} almost holds for general $\mu$. For any $v > v_c$ with $v \in \supp \mu$ we have $\hat{v} \le v$: if not, then with $N_v = (\frac{v_c + v}{2}, \frac{v+\hat{v}}{2})$, the above argument forces a survivor with speed in $N_v$ --- using that $\mu(N_v)\theta\big(\frac{v_c + v}{2}\big) > 0$ since $\frac{v_c + v}{2} > v_c$ is fast --- which contradicts that $\hat{v} > \frac{1}{2}(v_c + v)$. However, there is an obstacle if $\supp \mu$ has a gap above $v_c$. Take
 \[
	v_c^+ :=\inf \big(\supp \mu \cap (v_c, \infty)\big).
\]
Then $\hat{v}$ can only take values in $\{v_c, v_c^+\}$, but if $v_c^+$ is strictly larger than $v_c$, we still need to rule out the possibility that $\hat{v}$ can equal $v_c^+$. To overcome this obstacle, we go through Theorem \ref{thm:stronger}, starting with the following lemma.

\begin{lemma} \label{lem:inf_S}
	Let $v$ be a fast speed, and let $PS_{*v}$ be of the form $PS_{>v}$, $PS_{\ge v}$, or $PS_{= v}$. If $PS_{*v}$ is infinite, then so is $S_{*v} = PS_{*v} \cap S$. 
\end{lemma}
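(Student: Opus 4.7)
The plan is to iteratively produce a sequence of ``trial'' potential survivors in $PS_{*v}$ and show that, almost surely, infinitely many of them actually belong to $S$; this yields $|S_{*v}| = \infty$ whenever $|PS_{*v}| = \infty$.

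First I would set $\tau_1 := \min\{n \ge 0 : \b_n \in PS_{*v}\}$, a stopping time for $\mathcal{F}_n := \sigma(\omega_0, \ldots, \omega_n)$ that is a.s.\ finite on $\{|PS_{*v}| = \infty\}$. By the strong Markov property for the i.i.d.\ sequence $(\omega_i)$, conditional on $\mathcal{F}_{\tau_1}$ the tail $(\omega_{\tau_1+1}, \omega_{\tau_1+2}, \ldots)$ is i.i.d.\ $\mu \otimes \nu$, so $B_{[\tau_1, \infty)}$ given $v_{\tau_1}$ has the same law as $B(\b_0^{v_{\tau_1}}, \b_1, \b_2, \ldots)$. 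Because $\b_{\tau_1} \in PS$, the events $\{\b_{\tau_1} \in S\}$ and $\{\b_{\tau_1}$ survives behind$\}$ coincide, and their common conditional probability is $\theta(v_{\tau_1}) \ge \theta(v) > 0$.

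Next I would iterate: if $\b_{\tau_k} \in S$ set $R_k := \tau_k$; otherwise Lemma~\ref{lem:threats} gives that $\b_{\tau_k}$ is annihilated by some $\b_{K_k}$ with $K_k < \infty$ a.s., and I put $R_k := K_k$. Define $\tau_{k+1} := \min\{n > R_k : \b_n \in PS_{*v}\}$, a.s.\ finite on $\{|PS_{*v}| = \infty\}$. The aim is to prove inductively that $\P(\b_{\tau_j} \notin S \text{ for all } 1 \le j \le M) \le (1-\theta(v))^M$: at each stage, once $\b_{\tau_k}$ and (if applicable) $\b_{K_k}$ are both annihilated, the bullets of index $> R_k$ form a fresh i.i.d.\ sequence independent of the history, so the strong Markov argument of the first step applies again. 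Sending $M \to \infty$ gives that some $\b_{\tau_k}$ survives a.s.; then re-starting the same construction on the tail after each success yields infinitely many survivors, hence $|S_{*v}| = \infty$.

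The main obstacle will be the measurability of $\tau_{k+1}$ for $k \ge 1$: through $K_k$ it depends on the infinite future, since ``$\b_{K_k}$ annihilates $\b_{\tau_k}$ in $B$'' is not $\mathcal{F}_{K_k}$-measurable --- it also requires that no later bullet annihilate $\b_{K_k}$ first. To iterate the strong Markov property rigorously, I would approximate $K_k$ by the killer of $\b_{\tau_k}$ in the truncated process $B_{[\tau_k, m]}$, which is an honest $\mathcal{F}_m$-stopping time, use Theorem~\ref{thm:Marckert}(1) to conclude that for $m$ large these finite-$m$ killers stabilize to $K_k$, and pass to the limit in the conditional probability estimates above.
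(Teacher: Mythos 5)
Your approach matches the paper's in spirit: both build a sequence of stopping times through successive bullets of $PS_{*v}$ and bound the failure probability at each step by $1-\theta(v)$ via conditioning on the past and the i.i.d.\ tail. The step involving the strong Markov property and the observation that $\b_{\tau_1}\in S$ iff it survives behind, with conditional probability $\theta(v_{\tau_1}) \ge \theta(v)$, is exactly the key estimate the paper uses.

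Where you diverge is the definition of the next time $\tau_{k+1}$, and this is where the real gap sits. You set $R_k := K_k$ (the killer's index) and take $\tau_{k+1}$ to be the next index in $PS_{*v}$ strictly after $R_k$. As you yourself note, this is not enough: the collision between $\b_{K_k}$ and $\b_{\tau_k}$ can occur long after $\b_{K_k}$ is fired, so the event ``$\b_{\tau_k}$ is annihilated and $\b_{K_k}$ is its killer'' is not $\mathcal{F}_{K_k}$-measurable, and the new sequence $(\omega_{R_k+1},\dots)$ is not conditionally fresh given your conditioning event. Your proposed repair --- approximating $K_k$ through finite truncations $B_{[\tau_k,m]}$ and invoking Theorem~\ref{thm:Marckert}(1) --- is workable but roundabout. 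The paper's definition avoids the issue cleanly by making $\tau_{k+1}$ the first bullet of $PS_{*v}$ \emph{fired after $\b_{\tau_k}$ perishes}, in the temporal sense: bullets fired after the collision cannot alter it, so that collision (and whether it happened) is determined by $(\omega_0,\dots,\omega_m)$ for any $m$ with $t_m$ past the collision time, and in particular by $\mathcal{F}_{\tau_{k+1}}$. This makes $\tau_{k+1}$ an honest stopping time without any passage to the limit. Also note the diagnosis in your ``main obstacle'' paragraph is slightly off: the issue isn't that some later bullet might annihilate $\b_{K_k}$ first (that interference would itself be determined before the would-be collision time), but simply that many bullets can be fired between $\b_{K_k}$ and the collision.

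One further cosmetic point: you invoke Lemma~\ref{lem:threats} to produce the killer $K_k$, but this needs no lemma --- if $\b_{\tau_k}\notin S$ it is annihilated by definition. Finally, the paper's framing ``take any index $i$'' is tidier than your ``restart after each success,'' since it directly produces a member of $S_{*v}$ with index $>i$ for every $i$, hence infinitely many; your restart sketch is the same idea and would work once made precise.
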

\begin{proof}
	Take any index $i$. We define an infinite sequence of stopping times $\tau_1 \le \tau_2 \le \ldots$, agnostic to whether $|PS_{*v}| = \infty$. Let $\tau_1 > i$ be minimal such that $\b_{\tau_1} \in PS_{*v}$, if any such $\tau_1 > i$ exists. Otherwise let $\tau_1 = \infty$. For $\tau_n < \infty$, if possible let $\b_{\tau_{n+1}}$ be the first bullet in $PS_{*v}$ fired after $\b_{\tau_n}$ perishes. If no such $\tau_{n+1}$ exists, that is $\b_{\tau_n} \in S_{*v}$ or there are not enough bullets in $PS_{*v}$, let $\tau_{n+1}=\infty$. Then $\P(\tau_{n+1} < \infty \mid \tau_{n} < \infty) \le 1-\theta(v)$, so a.s.\ $\tau_{n}=\infty$ for large $n$. Thus either $PS_{*v}$ is finite or there is some $\b_{\tau_n} \in S_{*v}$ with $\tau_n > i$.
\end{proof}

We obtain the following corollary. 

\begin{corollary} \label{cor:above_zero}
	$v_c \ge \inf (\supp (\mu))$.
\end{corollary}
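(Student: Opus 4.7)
The plan is to show that $\theta(v) = 0$ for every $v < v_* := \inf(\supp\mu)$, which yields $v_c \ge v_*$ directly from the definition of $v_c$. The starting observation is that for $v < v_*$ we have $\mu\bigl((v,\infty)\bigr) = 1$, so almost surely every bullet $\b_i$ with $i \ge 1$ satisfies $v_i > v$; in particular, $\b_0^v$ is strictly the slowest bullet in $B' := B(\b_0^v, \vec{\b}_{[1,\infty)})$.

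The main step is a parity argument on the truncations $B'_n := B(\b_0^v, \b_1, \ldots, \b_n)$. I claim that for odd $n$, almost surely $\b_0^v \notin S(B'_n)$. Indeed, if $\b_0^v$ survived $B'_n$, then any other surviving bullet $\b_j$ would have $v_j > v$ and would catch up to $\b_0^v$ in finite time, contradicting their joint survival. Hence $S(B'_n) = \{\b_0^v\}$, which forces the remaining $n$ bullets to annihilate pairwise, requiring $n$ to be even. Since $n$ is odd this is impossible, proving the claim.

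To conclude, I would argue by contradiction: assume $\theta(v) > 0$. Then the finite threats lemma (Lemma \ref{lem:threats}) applies to $\b_0^v$ in $B'$, because its proof only uses that every threat $\b_{\tau_k}$ satisfies $\theta(v_{\tau_k}) \ge \theta(v) > 0$, which holds here since $v_{\tau_k} > v$. Reusing the argument of Theorem \ref{thm:Marckert}\,(1) on $B'$ then shows that on the positive-probability event $\{\b_0^v \in S(B')\}$, eventually $\b_0^v \in S(B'_n)$ for all sufficiently large $n$---and in particular for some large odd $n$, contradicting the parity step. Hence $\theta(v) = 0$.

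The one delicate point I anticipate is that Lemma \ref{lem:threats} and Theorem \ref{thm:Marckert} were stated for the canonical process $B$, not for the slightly modified process $B'$ which has the deterministic-velocity bullet $\b_0^v$ at its head. This should not be a genuine obstacle, as the earlier proofs only compare threat velocities against the velocity of the threatened bullet and never exploit randomness of $v_0$, but it does require a short justification to transport the arguments cleanly.
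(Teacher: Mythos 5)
Your proof is correct, but it takes a genuinely different route from the paper's. The paper assumes $v < \inf(\supp\mu)$ is fast, observes $PS_{>v} = PS$ is infinite, deduces from Lemma~\ref{lem:inf_S} that $S_{>v}$ is infinite, and concludes via shift invariance that $(S_{[1,\infty)})_{>v}$ is a.s.\ nonempty, forcing $\theta(v) = 0$. You instead run a parity argument on the truncations $B'_n$: since $\b_0^v$ is a.s.\ strictly slowest, if it survives $B'_n$ it must be the \emph{only} survivor, forcing $n$ even; this contradicts $\chi_{S(B'_n)}(\b_0^v) \to 1$ (from Lemma~\ref{lem:threats} and the argument of Theorem~\ref{thm:Marckert}(1)) on the event $\{\b_0^v \in S(B')\}$ if it had positive probability. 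Both proofs ultimately lean on the same renewal idea---yours directly via the finite threats lemma, the paper's via Lemma~\ref{lem:inf_S} whose proof is a similar renewal construction. Your parity observation is a neat, elementary special feature of the strictly-slowest-bullet case, but it is also why this argument is confined to $v < \inf(\supp\mu)$ and does not generalize to the fuller Theorem~\ref{thm:stronger}; the paper's machinery is heavier but reusable. The transport of Lemma~\ref{lem:threats} and Theorem~\ref{thm:Marckert}(1) to $B' = B(\b_0^v,\vec{\b}_{[1,\infty)})$ that you flag is indeed routine: the former only uses that the stopping-time tail $(\omega_{\tau_n+1},\omega_{\tau_n+2},\ldots)$ is i.i.d.\ with the usual law and that $\theta(v_{\tau_n}) \ge \theta(v)$, and the latter is then deterministic; neither uses randomness of $v_0$.
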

\begin{proof}
	Take any $v < \inf(\supp(\mu))$, and suppose by way of contradiction that $v$ is fast. Then $PS_{>v}=PS$ is infinite, so $S_{>v}$ is as well by Lemma~\ref{lem:inf_S}. But then $\theta(v) = \P(\SurvFirst) = 0$, since $(S_{[1,\infty)})_{>v}$ is non-empty, being infinite. This contradicts that $v$ is fast.
\end{proof}

We now have the tools to prove there are infinitely many survivors when $\mu$ has finite support. 

\begin{proof}[Proof of Theorem \ref{thm:finite}]
	Suppose $\mu$ is supported on some finite set $V = \{V_1, \ldots, V_n\}$. Then $PS_{>\hat{v}}$ is always finite: since each potential survivor's velocity is in $V$, $\hat{v}$ is the maximal $V_m \in V$ which is the speed of infinitely many potential survivors. Thus $\hat{v}$ is fast by Lemma  \ref{lem:stronger_half}. Then since $PS_{=\hat{v}}$ is infinite, so is $S_{=\hat{v}}$ by Lemma \ref{lem:inf_S}. As the speeds of survivors are non-increasing, all but finitely many have speed $\hat{v}$.
	
	It remains to show $\hat{v} = v_c$. We could appeal to Theorem \ref{thm:main}, or alternatively, let $V_m$ be the minimal value $\hat{v}$ takes with positive probability. Again, $V_m$ is fast. Each member of $PS_{>V_m}$ is followed by a velocity-$V_m$ survivor with probability $\mu(\{V_m\})\,\theta(V_m)$. Arguing as in Remark \ref{rem:canonical} then, we see $PS_{>V_m}$ must be finite, so in fact $\hat{v} = V_m$. To see $V_m = v_c$, by the above, $|S_{=V_m}| = \infty$. But then for any $v < V_m$, we have $\theta(v) = \P(\SurvFirst) = 0$, since $B_{[1,\infty)}$ has infinitely many velocity-$V_m$ survivors.
\end{proof}

\begin{figure*}
	\centering
	\begin{subfigure}{0.5\textwidth}
		\includegraphics[width=.9\linewidth]{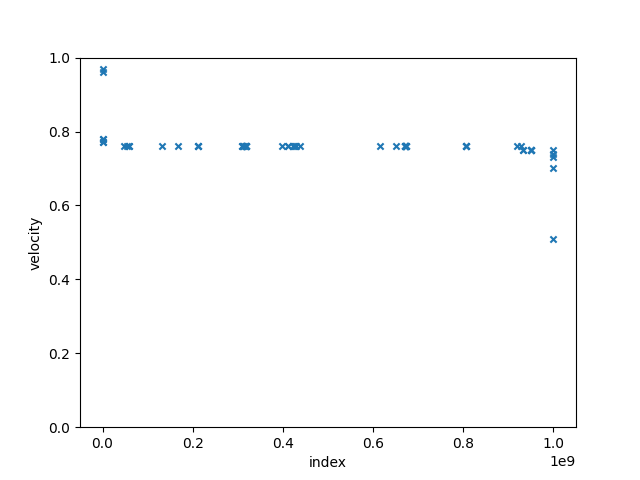}
	\end{subfigure}%
	~ 
	\begin{subfigure}{0.5\textwidth}
		\centering
		\includegraphics[width=.9\linewidth]{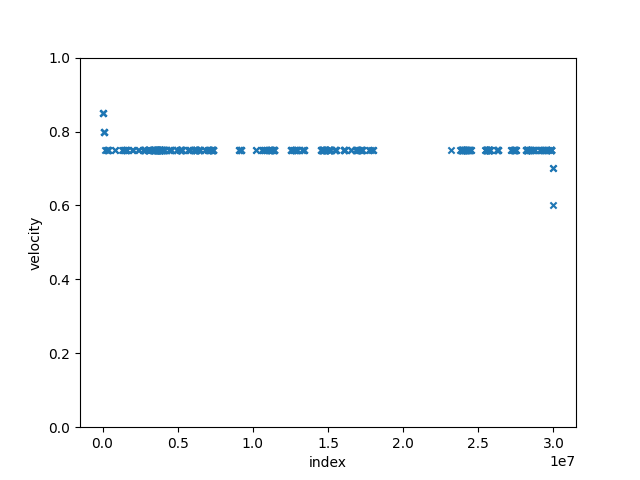}
	\end{subfigure}
	\caption{Two samples of $S_n$ --- the set of surviving bullets when $(\b_0, \ldots, \b_n)$ are fired --- for different uniform distributions on finite velocity sets. On the left, we use velocities $V_1, \ldots, V_{100}$ where $V_i = i/100 + \varepsilon_i$ and $\varepsilon_i \sim \mathcal{N}(0,.0002)$ is independent Gaussian noise added to avoid triple collisions. On the right we use the $20$ speeds $V_5, V_{10}, \ldots, V_{95}, V_{100}$, that is $.05, .1, \ldots, .95, 1.0$ plus noise. For the right, $n=30$ million and there are $574$ survivors. All but $12$ have velocity $V_{75}\sim.75$, with $9$ faster and $3$ slower at the end. On the left, we choose $n=1$ billion, as survivors are more spread out (Broutin and Marckert showed that for non-atomic velocity distributions, $|S_n|$ grows logarithmically \cite{broutinMarckert}). There are $62$ survivors: $52$ have velocity $V_{76}\sim \!\!.76$, $6$ are faster, and $4$ slower. The first speed-$V_{76}$ survivor has index $\sim48$ million.}
	\label{fig:finite}
\end{figure*}

\subsection{Zero-one law}

To prove a zero-one law for $|S_{*v}|=\infty$, we first analyze the effect adding a new bullet to the front has on the set of survivors. In Section 1.5 of \cite{broutinMarckert}, the case where a bullet is added to the back is characterized. Survivor sets $S_n$ and $S_{n+1}$ differ by exactly one bullet, with $S_{n+1}$ obtained by either removing the last survivor of $S_n$, or adding a new survivor to the back. We reproduce their argument from the front.

\Needspace{12\baselineskip}
\begin{lemma} \label{lem:delta_s}
	If $S_{[1,n]}$ is non-empty, call its first bullet $\b_{s_1}$. Then $S_n = S_{[0,n]}$ is either of the form $S_{[1,n]} \setminus \{\b_{s_1}\}$ or $S_{[1,n]} \cup \{\b_{s_0}\}$ with $s_0 < s_1$. If $S_{[1,n]}$ is empty then $|S_n|=1$. 
\end{lemma}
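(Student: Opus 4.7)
My plan is to track the fate of the new front bullet $\b_0$ when going from $B_{[1,n]}$ to $B_n$, adapting the Broutin--Marckert argument for bullets added at the back. Suppose $S_{[1,n]}$ is non-empty with first survivor $\b_{s_1}$. The key structural observation is that $\b_{s_1}$ acts as a barrier in $B_{[1,n]}$: any bullet with index greater than $s_1$ cannot pass $\b_{s_1}$ (else $\b_{s_1}$ would perish), so its collision partner in $B_{[1,n]}$ is also in $\{\b_{s_1+1}, \ldots, \b_n\}$. Moreover $\b_{s_1}$ catches no front bullet, so $\b_1, \ldots, \b_{s_1-1}$ pair up entirely among themselves.

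If $\b_0$ survives in $B_n$, then no bullet catches $\b_0$, hence $\b_0$'s presence cannot affect the dynamics of $\b_1, \ldots, \b_n$, and so $S_n = \{\b_0\} \cup S_{[1,n]}$ (the ``add $\b_{s_0}$'' case with $s_0 = 0 < s_1$). Otherwise $\b_0$ is annihilated at some time $\tau$ by some $\b_k$; up to $\tau$ the dynamics of $\b_1, \ldots, \b_n$ in $B_n$ agree with those in $B_{[1,n]}$, and the post-$\tau$ difference between the two processes reduces to running $B_{[1,n]}$ with $\b_k$ killed at time $\tau$. This killing spares $\b_k$'s $B_{[1,n]}$-collision partner $\b_l$ (if any), which may then pair with a different partner, sparing that bullet's original partner in turn, and so on; I would track a single alternating chain $\b_0 \to \b_k \to \b_l \to \cdots \to \b^*$ terminating either at a $B_{[1,n]}$-survivor (the ``remove'' case) or at a non-survivor that becomes a new $B_n$-survivor (the ``add'' case). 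The barrier property forces this chain to remain within $\{\b_0, \b_1, \ldots, \b_{s_1}\}$, since any back bullet's $B_{[1,n]}$-partner is also a back bullet, so the chain cannot cross the barrier $\b_{s_1}$. It follows that either $\b^* = \b_{s_1}$, giving $S_n = S_{[1,n]} \setminus \{\b_{s_1}\}$, or $\b^* = \b_{s_0}$ for some $s_0 < s_1$, giving $S_n = S_{[1,n]} \cup \{\b_{s_0}\}$.

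For the empty $S_{[1,n]}$ case, every bullet in $B_{[1,n]}$ is matched in a collision, so the chain from $\b_0$ (which is forced, since $\b_0$ cannot survive if every back bullet is matched to another and there is no barrier to absorb a new survivor at the front) must terminate at a new $B_n$-survivor, yielding $|S_n|=1$. The main obstacle will be rigorously establishing the chain structure: specifically, that killing $\b_k$ in $B_{[1,n]}$ causes collisions to realign along a single alternating path rather than forming multiple disjoint chains or cycles. This is the front-facing analog of Broutin--Marckert's back-addition argument and should be provable by inducting on the number of post-$\tau$ collisions and tracking how each differs between $B_n$ and $B_{[1,n]}$ via the spared-bullet chain, using the barrier to keep the chain confined to indices at most $s_1$.
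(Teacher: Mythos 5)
Your ``single alternating chain'' picture is the right structural intuition, and in fact it is exactly what the paper's argument produces implicitly --- but your proposal leaves precisely the hard part as a stated gap, namely that the realignment of collisions forms one path (no branching, no cycles) confined to $\{\b_0,\ldots,\b_{s_1}\}$ and terminating correctly. You acknowledge this as ``the main obstacle,'' and the induction you gesture at (on the number of post-$\tau$ collisions) is not obviously well-founded: after $\b_0$ dies, the number of later collisions need not decrease cleanly as the chain advances, and tracking the symmetric difference of two collision matchings directly requires you to independently rule out crossing interactions with bullets beyond $\b_{s_1}$ at each step.

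The paper avoids these issues with a sharper induction parameter: the number $f$ of consecutive non-survivors $\b_1,\ldots,\b_f$ at the front of $B_{[1,n]}$ (with $f = s_1-1$ if $S_{[1,n]}\neq\emptyset$, else $f=n$). If $\b_0$ survives or is killed by $\b_{s_1}$ one is done immediately; otherwise $\b_0$ is killed by some $\b_\ell$ with $\ell\le f$, whose $B_{[1,n]}$-killer $\b_r$ (necessarily $\ell<r\le f$) is freed, and one shows $S_n = S_{[r,n]}$ while $S_{[1,n]} = S_{[r+1,n]}$. This reduces the claim to the same statement for $B_{[r,n]}$ versus $B_{[r+1,n]}$, which has strictly fewer front non-survivors, and strong induction closes it. The ``chain'' you describe is exactly the unrolling of this recursion --- it is automatically a single path with strictly increasing indices, and the confinement to $\{0,\ldots,s_1\}$ and the termination at either $\b_{s_1}$ (remove) or a freed survivor (add) come for free. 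So your proposal is pointing at the right phenomenon, but it lacks the inductive decomposition that actually proves the chain has the claimed shape; as written it would not yet constitute a proof.
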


\begin{proof}
	Let $f=s_1 - 1$ when $S_{[1,n]}$ is non-empty and $f=n$ otherwise, so $\b_1, \ldots \b_f$ is the consecutive list of non-survivors at the front of $B_{[1,n]}$. There are three possibilities for $\b_0$ in $B_n$: it can survive, it can be annihilated by some $\b_\ell$ with $1 \le \ell \le f$ as in Figure \ref{fig:add_one}, or it can be annihilated by $\b_{s_1}$. If  $\b_0$ survives then $S_n = \{\b_{0}\} \cup S_{[1,n]}$, and if it is annihilated by $\b_{s_1}$ then $S_n = S_{[1,n]} \setminus \{\b_{s_1}\}$. The second scenario remains, when $\b_\ell \ra \b_0$ in $B_n$, whereas in $B_{[1,n]}$ we have $\b_r \ra \b_\ell$ for some $\ell < r \le f$. The collision between $\b_0$ and $\b_\ell$ frees up $\b_r$, so $S_n = S_{[r,n]}$ while $S_{[1,n]} = S_{[r + 1, n]}$. By strong induction we are done, since $B_{[r+1,n]}$ has $f-r < f$ non-survivors at the front.
\end{proof}
\begin{figure}[tbp]
	\centering
	\includegraphics[width=0.5\linewidth]{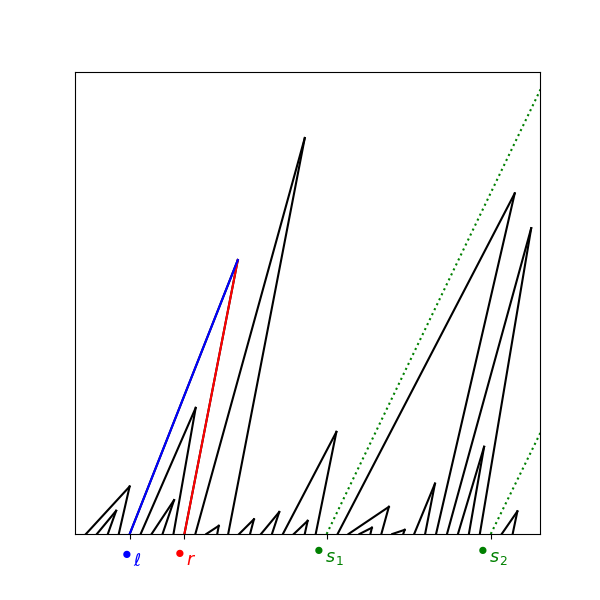}
	\caption{The process $B_{[1,n]}$ with two survivors, $\b_{s_1}$ and $\b_{s_2}$. The bullet $\b_\ell$ is one of the bullets that can potentially annihilate $\b_0$.}
	\label{fig:add_one}
\end{figure}

\begin{lemma} \label{lem:zero_one}
	For $S_{*v}$ of the form $S_{>v}$, $S_{\ge v}$, or $S_{=v}$, we have $\P(|S_{*v}| = \infty) \in \{0,1\}$. 
\end{lemma}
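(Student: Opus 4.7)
My plan is to reduce the event $\{|S_{*v}| = \infty\}$ to a shift-invariant event and then invoke Kolmogorov's zero-one law on the i.i.d.\ input sequence $(\omega_i)_{i \ge 0}$. The key intermediate step is an infinite-process analog of Lemma \ref{lem:delta_s}: I claim the pathwise bound
\[
    |S \triangle S_{[1,\infty)}| \le 1,
\]
where $\triangle$ denotes symmetric difference.

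To establish this bound, I will argue by contradiction. Suppose $\b_i$ and $\b_j$ are two distinct bullets in $S \triangle S_{[1,\infty)}$. Applying Theorem \ref{thm:Marckert}\ref{thm:Marckert-1} pathwise to $\vec{\b}$, the values $\chi_{S_n}(\b_i)$ and $\chi_{S_n}(\b_j)$ stabilize as $n \to \infty$ to $\chi_S(\b_i)$ and $\chi_S(\b_j)$. The same theorem applied to $\vec{\b}_{[1,\infty)}$ (valid since it agrees with $\vec{\b}(T\omega)$ up to reindexing, per Remark \ref{rem:shifts}) gives that $\chi_{S_{[1,n]}}(\b_i)$ and $\chi_{S_{[1,n]}}(\b_j)$ stabilize to their limits in $S_{[1,\infty)}$. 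Therefore, for all sufficiently large $n$, both $\b_i$ and $\b_j$ lie in $S_n \triangle S_{[1,n]}$, contradicting Lemma \ref{lem:delta_s}. Restricting to bullets whose velocities satisfy the ``$*v$'' condition can only shrink the symmetric difference, so $\big||S_{*v}| - |(S_{[1,\infty)})_{*v}|\big| \le 1$, yielding pathwise
\[
    \{|S_{*v}| = \infty\} = \{|(S_{[1,\infty)})_{*v}| = \infty\}.
\]

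By Remark \ref{rem:shifts}, $|(S_{[1,\infty)})_{*v}|(\omega) = |S_{*v}|(T\omega)$ pathwise, so the event $A := \{|S_{*v}| = \infty\}$ satisfies $A = T^{-1}A$. Since $T^{-1}A$ is $\sigma(\omega_1, \omega_2, \ldots)$-measurable, so is $A$; iterating shows $A \in \bigcap_{n \ge 0} \sigma(\omega_n, \omega_{n+1}, \ldots)$, the tail $\sigma$-algebra of the i.i.d.\ sequence $(\omega_i)_{i \ge 0}$. Kolmogorov's zero-one law then gives $\P(A) \in \{0,1\}$.

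The main obstacle is the pathwise bound $|S \triangle S_{[1,\infty)}| \le 1$. A direct attempt to mimic the proof of Lemma \ref{lem:delta_s} by tracing the chain of freed-up bullets is problematic in the infinite-horizon setting because the chain need not terminate at a nearest subsequent survivor. Routing through Theorem \ref{thm:Marckert} sidesteps this: stabilization of $\chi_{S_n}$ and $\chi_{S_{[1,n]}}$ at every fixed index reduces the infinite claim to the finite bound already in hand.
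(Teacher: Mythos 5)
Your proof is correct and takes a genuinely different route from the paper's. The paper starts from the assumption $\P(E) > 0$ and applies Birkhoff's ergodic theorem to conclude that a.s.\ infinitely many $k$ satisfy $|(S_{[k,\infty)})_{*v}| = \infty$; it then iterates Lemma~\ref{lem:delta_s} together with the stabilization in Theorem~\ref{thm:Marckert} to show that for such $k$, at most $k$ bullets of $S_{[k,\infty)}$ fail to lie in $S$, hence $|S_{*v}| = \infty$ a.s. You instead prove the pathwise bound $|S \triangle S_{[1,\infty)}| \le 1$ directly --- the natural infinite-horizon analog of the Broutin--Marckert observation that $|S_n|$ and $|S_{n+1}|$ differ by one, and a clean statement of independent interest --- and deduce that $A = \{|S_{*v}| = \infty\}$ is a.s.\ shift-invariant, hence a.s.\ equal to a tail event, so Kolmogorov's zero-one law applies. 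Both arguments share the same engine (Theorem~\ref{thm:Marckert} reduces an infinite-horizon claim to the finite Lemma~\ref{lem:delta_s}), but yours dispenses with Birkhoff and the per-$k$ counting. The one thing you should make explicit: since Lemma~\ref{lem:delta_s} and Theorem~\ref{thm:Marckert} hold only almost surely, $A = T^{-1}A$ holds only up to a null set, so $A$ is a.s.\ equal to (rather than literally equal to) a tail event; that is still enough for Kolmogorov's zero-one law, but the distinction is worth recording.
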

\begin{proof}
    Let $E = \{|S_{*v}| = \infty\}$, and suppose $\P(E) > 0$. We will show that $\P(E) = 1$. 

    By Birkhoff's ergodic theorem, 
    $$
        \lim_{n \to \infty}\frac{1}{n}\sum_{k=0}^{n-1} \mathbf{1}_E(T^k \omega) = \E[\mathbf{1}_E] = \P(E) > 0,
    $$
    using that the shift operator $T$ is an ergodic transformation for any i.i.d.\ sequence $\omega = (\omega_0, \omega_1, \ldots)$. Therefore, almost surely there are infinitely many $k$ such that $T^k \omega \in E$, and thus $|(S_{[k,\infty)})_{*v}| = \infty$ (see Remark~\ref{rem:shifts}). 

    Fix $k \ge 1$. We now show that on $|(S_{[k,\infty)})_{*v}| = \infty$, almost surely $|S_{[k,\infty)} \setminus S| \le k$, and thus $|S_{*v}| = \infty$ as well. Once this is established, it follows that $S_{*v}$ is almost surely infinite, so indeed $\P(E) = 1$. 
    
    Suppose then that $|(S_{[k,\infty)})_{*v}| = \infty$. Enumerate $S_{[k,\infty)} = \{\b_{s_0}, \b_{s_1}, \ldots\}$ with $k \le s_0 < s_1 < \ldots$. It suffices to show $\b_{s_i} \in S$ for all $i \ge k$. By Theorem~\ref{thm:Marckert} part~\ref{thm:Marckert-1}, we have $\{\b_{s_0}, \ldots \b_{s_i}\} \subseteq S_{[k,n]}$ for large $n$. Then $\{\b_{s_1}, \ldots \b_{s_i}\} \subseteq S_{[k-1,n]}$ by Lemma \ref{lem:delta_s}. Applying Lemma \ref{lem:delta_s} again we get $\{\b_{s_2}, \ldots \b_{s_i}\} \subseteq S_{[k-2,n]}$. Repeating this $k$ times, $\{\b_{s_{k}}, \ldots \b_{s_i}\} \subseteq S_{[0,n]} = S_n$ for large $n$. By Theorem \ref{thm:Marckert} again then, $\{\b_{s_{k}}, \ldots \b_{s_i}\} \subseteq S$. 
\end{proof}

\begin{corollary} \label{cor:zero_one}
	$\P(|S| = \infty) \in \{0,1\}$. 
\end{corollary}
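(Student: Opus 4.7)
The plan is to deduce the corollary immediately from Lemma \ref{lem:zero_one}. Since by assumption $\mu$ is supported on $(0, \infty)$, every bullet has strictly positive velocity, and therefore the set $S_{>0} = \{\b_i \in S \colon v_i > 0\}$ coincides with $S$. Inspecting the proof of Lemma \ref{lem:zero_one}, one sees that it nowhere uses that $v$ is fast — it invokes only Birkhoff's theorem, Theorem \ref{thm:Marckert}, and the front-addition analysis of Lemma \ref{lem:delta_s}, none of which impose positivity of $\theta(v)$ — so we may legitimately apply the lemma with $v = 0$ to obtain
\[
    \P(|S| = \infty) = \P(|S_{>0}| = \infty) \in \{0,1\}.
\]

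There is no real obstacle here; the whole content of the corollary is this one-line reduction to Lemma \ref{lem:zero_one}. If instead one wanted a more self-contained argument avoiding the indexed lemma, the alternative route would be to directly invoke ergodicity: the inclusion $|S_{[k,\infty)} \setminus S| \le k$ shown inside the proof of Lemma \ref{lem:zero_one} implies that the event $\{|S| = \infty\}$ is shift-invariant up to null sets, after which ergodicity of the shift $T$ on the i.i.d.\ sequence $\omega$ yields the zero-one law. Either way, the work has already been done upstream.
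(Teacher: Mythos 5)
Your proof is correct and matches the paper's: the corollary is exactly the $v=0$ case of Lemma~\ref{lem:zero_one}, with the paper writing $S = S_{\ge 0}$ (which holds unconditionally) where you write $S = S_{>0}$ (which uses positivity of $\mu$). Your caution about checking the proof of Lemma~\ref{lem:zero_one} for a hidden fastness hypothesis is reasonable but not strictly needed, since the lemma's statement already places no restriction on $v$.
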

\begin{proof}
	This is a direct consequence of Lemma~\ref{lem:zero_one} since $S = S_{\ge 0}$.
\end{proof}

\subsection{Remaining proofs}

We can now prove our two main theorems.

\begin{proof}[Proof of Theorem \ref{thm:stronger}]
	Lemma~\ref{lem:stronger_half} is the first half of the theorem. For the second half, suppose by way of contradiction that $v$ is fast and $\P(|PS_{>v}| = \infty) > 0$. By Lemmas \ref{lem:inf_S} and \ref{lem:zero_one} then, almost surely $S_{>v}$ is infinite. But then $v$ is slow, since $(S_{[1,\infty)})_{>v}$ is never empty.
\end{proof}

\begin{proof}[Proof of Theorem \ref{thm:main}]
	Any $v > v_c$ is fast, so $\hat{v} \le v$ almost surely by Theorem \ref{thm:stronger}, and therefore $\hat{v} \le v_c$ almost surely. Corollary \ref{cor:vhat_ge_v_c} is the other direction.
\end{proof}

Finally, we establish our continuity results.

\begin{proof}[Proof of Theorem \ref{thm:cont}]
	\leavevmode
	\begin{enumerate}
		\item Right-continuity of $\theta$ follows straightforwardly from the definition of the bullet process. Whenever $\SurvFirst$ fails, so does $\SurvFirst[u]$ for slightly faster $u$. 
		
		More precisely, for any $u > v$, we have 
		\begin{equation}\label{eq:theta-u-minus-theta-v}
			\theta(u) - \theta(v) = \P(\SurvFirst[u]) - \P(\SurvFirst) = \P(\SurvFirst[u] \setminus \SurvFirst),
		\end{equation}
		using that the event $\SurvFirst$ is increasing in $v$. Let $v_m(\omega) \in [0,\infty]$ be the infimum over $v' 
		\ge 0$ for which $\omega \in \SurvFirst[v']$, so 
		\begin{equation}\label{eq:vm}
			\SurvFirst[u]  \setminus \SurvFirst \subseteq \{v_m \le u\} \cap \SurvFirst^c.
		\end{equation}
		On $\SurvFirst^c$, bullet $\b_0^v$ collides in $B(\b_0^v, \vec{\b}_{[1, \infty)})$ with some faster $\b_r$ at some time $t$. Since triple collisions do not occur, as in Figure \ref{fig:cont} we still have $\b_r \ra \b_0^{v'}$ for slightly faster $v' > v$, since $\b_r$ survives to at least some $t' >  t-\Delta_1$ in $B_{[1,\infty)}$. Therefore, $v_m$ is strictly larger than $v$ almost surely. As $u \searrow v$ then, using \eqref{eq:theta-u-minus-theta-v} and \eqref{eq:vm},
		\[
			\theta(u) - \theta(v) \le \P(\SurvFirst^c, v_m \le u) \to 0.
		\]
		
		\begin{figure}[tbp]
			\centering
			\includegraphics[width=0.5\linewidth]{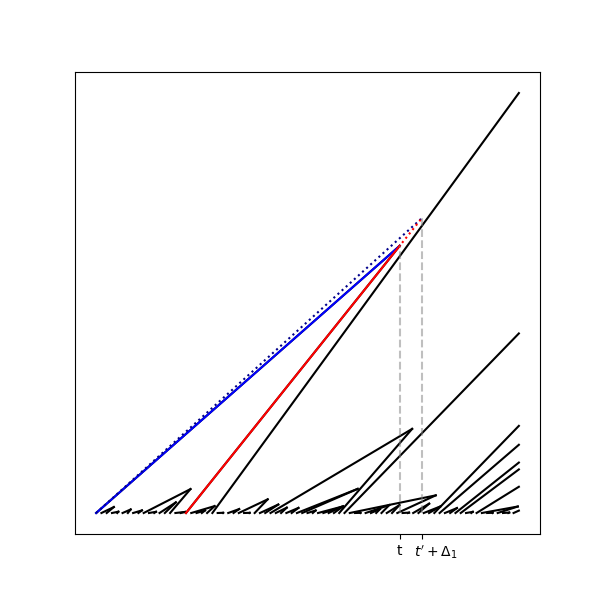}
			\caption{Bullet $\b_0^v$'s path is drawn in solid blue, and $\b_r$'s in red. Bullet $\b_0^{v'}$ is drawn in dotted blue. It still collides with $\b_r$ since there are no triple collisions.}
			\label{fig:cont}
		\end{figure}
		
		\item Take $v > v_c$. First suppose $\mu(\{v\}) > 0$. Let $E$ be the event that $v_1 = v$ and $\b_1 \in S_{[1,\infty)}$, so $\P(E) = \mu(\{v\})\theta(v) > 0$. As for any $u < v$, $E \subseteq \SurvFirst \setminus \SurvFirst[u]$, then by \eqref{eq:theta-u-minus-theta-v} we have $\theta(v) - \theta(u) \ge \P(E)$.
		
		Now suppose $v$ is not an atom of $\mu$. We need to show left-continuity. Take some $\omega \in \SurvFirst$. If $S_{[1,\infty)}$ is non-empty, let its first and fastest bullet have speed $v'$. Then $v'$ is not faster $v$, and is almost surely slower. If $S_{[1,\infty)} = \emptyset$, simply let $v'$ be the midpoint of $[v_c, v]$. As $v'$ is fast, $(PS_{[1,\infty)})_{>v'}$ is finite, and all its bullets die. Enumerate $(PS_{[1,\infty)})_{>v'} = (\b_{i_1}, \ldots \b_{i_n})$. These are the only bullets that could possibly threaten $\b_0^u$ for any $u \in [v',v]$. Now each $\b_{i_k}$ perishes in $B(\b_0^v, \vec{\b}_{[1,\infty)})$, and so is involved in some collision at time $t^k$ and location $x_k$. Bullet $\b^v_0$ avoids this collision by advancing to some $y_k > x_k$ at time $t^k$, as would $\b^{u_{k}}_1$ for a slightly slower $u_k < v$. Then $v_m(\omega)$ is at most the maximum of these $u_k$. Most importantly, $v_m(\omega) < v$ a.s. So for $u < v$, $\theta(v) - \theta(u) \le \P(\SurvFirst, v_m \ge u) \to 0$ as $u \nearrow v$. 
		
		\item With right-continuity established, continuity at $v_c$ is equivalent to $\theta(v_c)=0$, so the result follows from Theorem \ref{thm:stronger}. 
	\end{enumerate}
\end{proof}

\section*{Acknowledgements}
I would like to thank Matt Junge for his guidance and many stimulating conversations.

\bibliographystyle{amsalpha}
\bibliography{BA.bib}

\end{document}